\DeclareMathOperator\erf{erf}
\DeclareMathOperator\erfc{erfc}
\DeclareMathOperator\erfi{erfi}
\theoremstyle{definition}
\newtheorem{definition}{Definition}[section]
\theoremstyle{theorem}
\newtheorem{theorem}{Theorem}[section]
\theoremstyle{lemma}
\newtheorem{lemma}{Lemma}[section]
\theoremstyle{corollary}
\theoremstyle{theorem}
\newtheorem{remark}{Remark}
\theoremstyle{theorem}
\title{Approximation of the modified error function}
\author[1,2]{Andrea N. Ceretani}
\author[3,4]{Natalia N. Salva}
\author[1]{Domingo A. Tarzia}
\affil[1]{{\footnotesize CONICET - Depto. de Matem\'atica, Facultad de Ciencias Empresariales, Univ. Austral, Paraguay 1950, S2000FZF Rosario, Argentina.}}
\affil[2]{{\footnotesize Depto. de Matem\'atica, Facultad de Ciencias Exactas, Ingenier\'ia y Agrimensura, Univ. Nacional de Rosario, Pellegrini 250, S2000BTP Rosario, Argentina.}}
\affil[3]{{\footnotesize CONICET - CNEA, Depto. de Mec\'anica Computacional, Centro At\'omico Bariloche, Av. Bustillo 9500, 8400 Bariloche, Argentina.}}
\affil[4]{{\footnotesize Depto. de Matem\'atica, Centro Regional Bariloche, Univ. Nacional del Comahue, Quintral 250, 8400 Bariloche, Argentina.}}
\date{}
\begin{document}  
\maketitle

\begin{abstract}
In this article, we obtain explicit approximations of the modified error function introduced in {\em Cho, Sunderland. Journal of Heat Transfer} 96-2 (1974), 214-217, as part of a Stefan problem with a temperature-dependent thermal conductivity. This function depends on a parameter $\delta$, which is related to the thermal conductivity in the original phase-change process. We propose a method to obtain approximations, which is based on the assumption that the modified error function admits a power series representation in $\delta$. Accurate approximations are obtained through functions involving error and exponential functions only. For the special case in which $\delta$ assumes small positive values, we show that the modified error function presents some characteristic features of the classical error function, such as monotony, concavity, and boundedness. Moreover, we prove that the modified error function converges to the classical one when $\delta$ goes to zero.
\end{abstract}

{\bf Keywords}: Modified error function, error function, phase-change problem, temperature-\\dependent thermal conductivity, nonlinear second order ordinary differential equation.

{\bf 2010 AMS Subject Classification}: 35R35, 80A22, 34B15, 34B08.

\section{Introduction}\label{Sect:Introduction}
Phase-change processes are present in a broad variety of natural, technological and industrial situations \cite{AkMoSt2014,BoIv2014,CaFuFa2016,ElKh2013,FuFaPr2014,
GaToTu2009,KuHa2016}. Modelling them properly is then crucial for understanding or predicting the evolution of many physical processes. One common assumption when modelling phase-change processes is to consider constant thermophysical properties. Nevertheless, it is known that certain 
materials present properties which seem to obey other laws. Recently, some models including variable latent heat, density, melting temperature or thermal conductivity have been proposed in \cite{BaMcHsMo2014,BoTa2018,Fo2018-Manuscript,FoMyMi2015,RiMy2016,SaTa2011-a,VoSwPa2004}.

In this sense, in 1974, Cho and Sunderland presented a similarity solution for a Stefan problem in which the thermal conductivity is a linear function of the temperature distribution \cite{ChSu1974}. It is well known that similarity solutions to Stefan problems with constant coefficients can be expressed in terms of the error function $\erf$,
\begin{equation}\label{erf}
\erf(x)=\frac{2}{\sqrt{\pi}}\displaystyle\int_0^x\exp(-\xi^2)d\xi\quad x>0.
\end{equation}
In contrast to this, the solution obtained by Cho and Sunderland involves another function, which they have called {\em modified error function}. It was defined as the solution to a nonlinear boundary value problem, and its \mbox{existence} and uniqueness was recently proved in \cite{CeSaTa2017} for thermal conductivities with moderate variations. In spite of the latter, the modified error function was widely used for solving diffusion problems \cite{BrNaTa2007,CoKa1994,FrVi1987,Lu1991,OlSu1987,SaTa2011-b,Ta1998}, even before it was formally introduced by Cho and Sunderland in 1974 \cite{Cr1956,Wa1950}. 

When phase-change processes come from technological or industrial problems, not only appropriate models are required but also their solutions (or, at least, some properties of them). Sometimes, when explicit solutions are not known, models are solved through numerical methods which are tested with experimental data. When the latter are not available, one common practice is to test numerical methods by applying them to another problem whose explicit solution is known. Thus, having explicit solutions to models for phase-change processes is sometimes quite useful. Many works have been done in this direction, see for example 
\cite{BoTa2018-a,BrNa2016,BrNa2017,CeSaTa2018,CeTa2014,DeEtAl1989,EvKi2000,
FaGuPrRu1993,FaPrTa1999,Go2002,Ho1988,McWuHi2008,NaTa2000,
Ro1986,SoWiAl1983,Ta1981-1982,Ta2011,Ta2017,TrBr1994,VoSwPa2004,
WiSo1986}.
Regarding the model in \cite{ChSu1974}, explicit solutions are not known yet. Aiming to make a contribution in this sense, the main goal of this article is to propose some approximations of the modified error function.

In order to present our ideas clearly, we briefly recall how the modified error function arises from the original phase-change process. For simplicity, we consider the case of a one-phase melting problem for a semi-infinite slab with phase-change temperature $T_m$, whose boundary $x=0$ is maintained at a constant temperature $T_\infty>T_m$. For this case, the thermal conductivity from Cho and Sunderland is 
\begin{equation}\label{k}
k(T)=k_0\left\{1+\delta\left(\frac{T-T_\infty}{T_m-T_\infty}\right)\right\},
\end{equation}
where $k_0>0$ is the thermal conductivity at $x=0$, and $\delta$ is some dimensionless parameter. Since $T=T_m$ at the free boundary, $\delta>-1$ becomes a necessary condition to assure the thermal conductivity is positive when $x=s(t)$. When the temperature distribution is assumed to be in the form $T(x,t)=A+B\Phi_\delta\left(\frac{x}{2\sqrt{\alpha_0t}}\right)$,\footnote{$\alpha_0$ is the coefficient of diffusion at $t=0$.} for $A$ and $B$ constant, one obtains that $\Phi_\delta$ can be found by solving the \mbox{following} nonlinear boundary value problem (see details in \cite{ChSu1974}): 
\begin{subequations}\label{Pb:y}
\begin{align}
\label{eq:y}&[(1+\delta y(x))y'(x)]'+2xy'(x)=0\quad 0<x<+\infty\\
\label{cond:0}&y(0)=0\\
\label{cond:infty}&y(+\infty)=1.
\end{align}
\end{subequations}  
The solution $\Phi_\delta$ to this problem is the already mentioned {\em modified error function}. Some plots for $\Phi_\delta$ are shown in Figure \ref{Fig:FEM}. They were obtained by numerically solving problem (\ref{Pb:y}) for $\delta=-0.9,-0.5,0,0.5,1,2$. For the special case in which $\delta=0$, which corresponds to a constant thermal conductivity, one finds that the modified error function coincides with the classical one. The coincidence is stronger than that shown from the numerical computations, since it can be easily proved that the error function is the only solution to problem (\ref{Pb:y}) when $\delta=0$. As $\delta$ moves away from zero, the modified error function differs more and more from the classical one. Nevertheless, both functions seem to share some properties (such as non-negativity, boundedness and rapid convergence to 1 when $x\to+\infty$). Moreover, when $\delta>0$, the modified error function seems to be increasing and concave, as the error function is. Observe that $-1<\delta<0$ is related to thermal conductivities that decrease when temperature increases (e.g. lead, methanol), whereas $\delta>0$ corresponds to thermal conductivities that increase as the temperature does (e.g. glycerin, mercury).
\begin{figure}[H] 
\caption{{\em Modified error function $\Phi_\delta$ for $delta=-0.9,-0.5,0,0.5,1,2$ over different domains.}}
\label{Fig:FEM}
\centering
\hspace*{-2cm}
\begin{subfigure}{0.4\textwidth}
\includegraphics[scale=0.35]{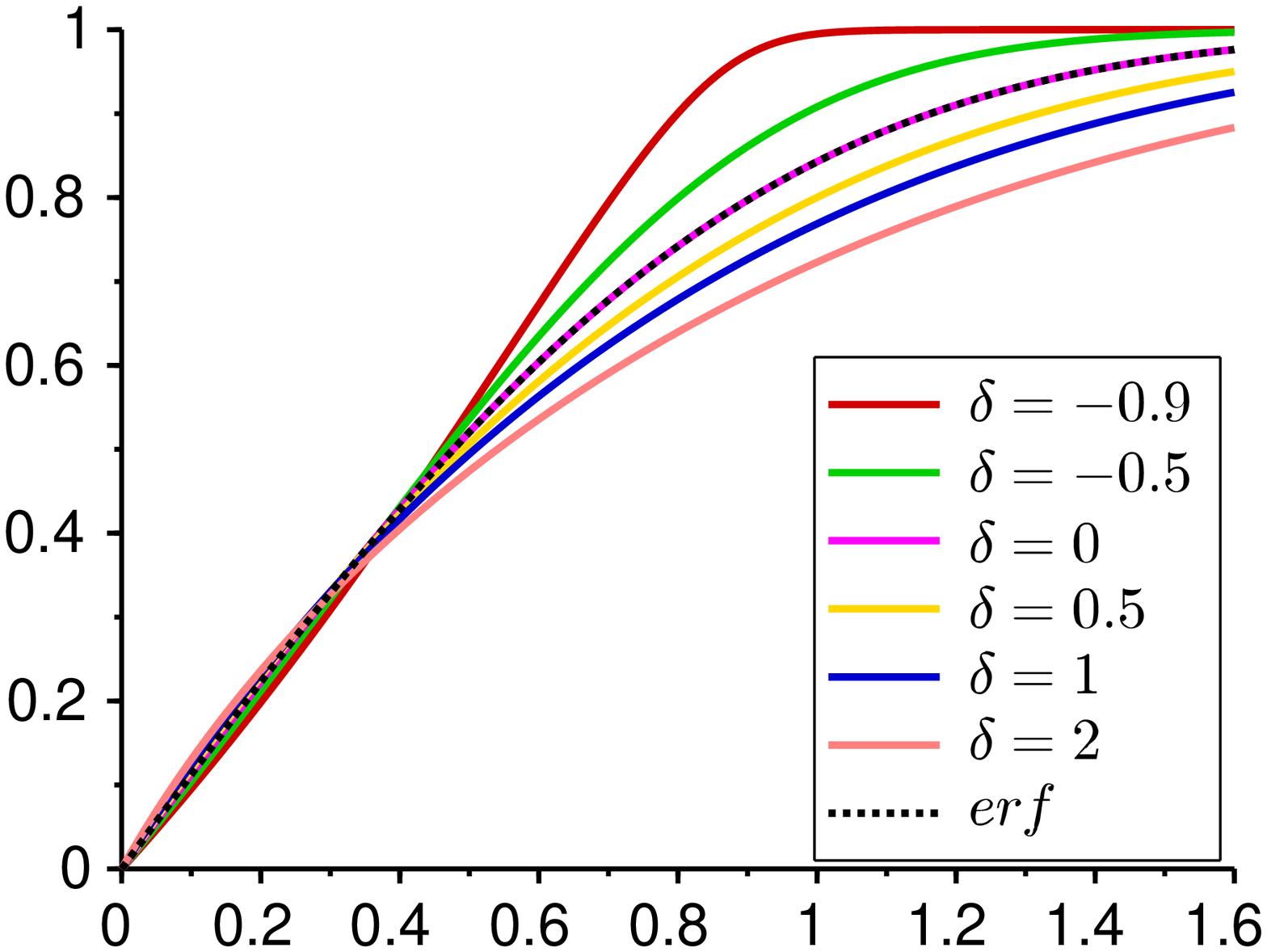}
\caption{$x\in[0,1.6]$}
\end{subfigure}
\hspace*{1.5cm}
\begin{subfigure}{0.4\textwidth}
\includegraphics[scale=0.35]{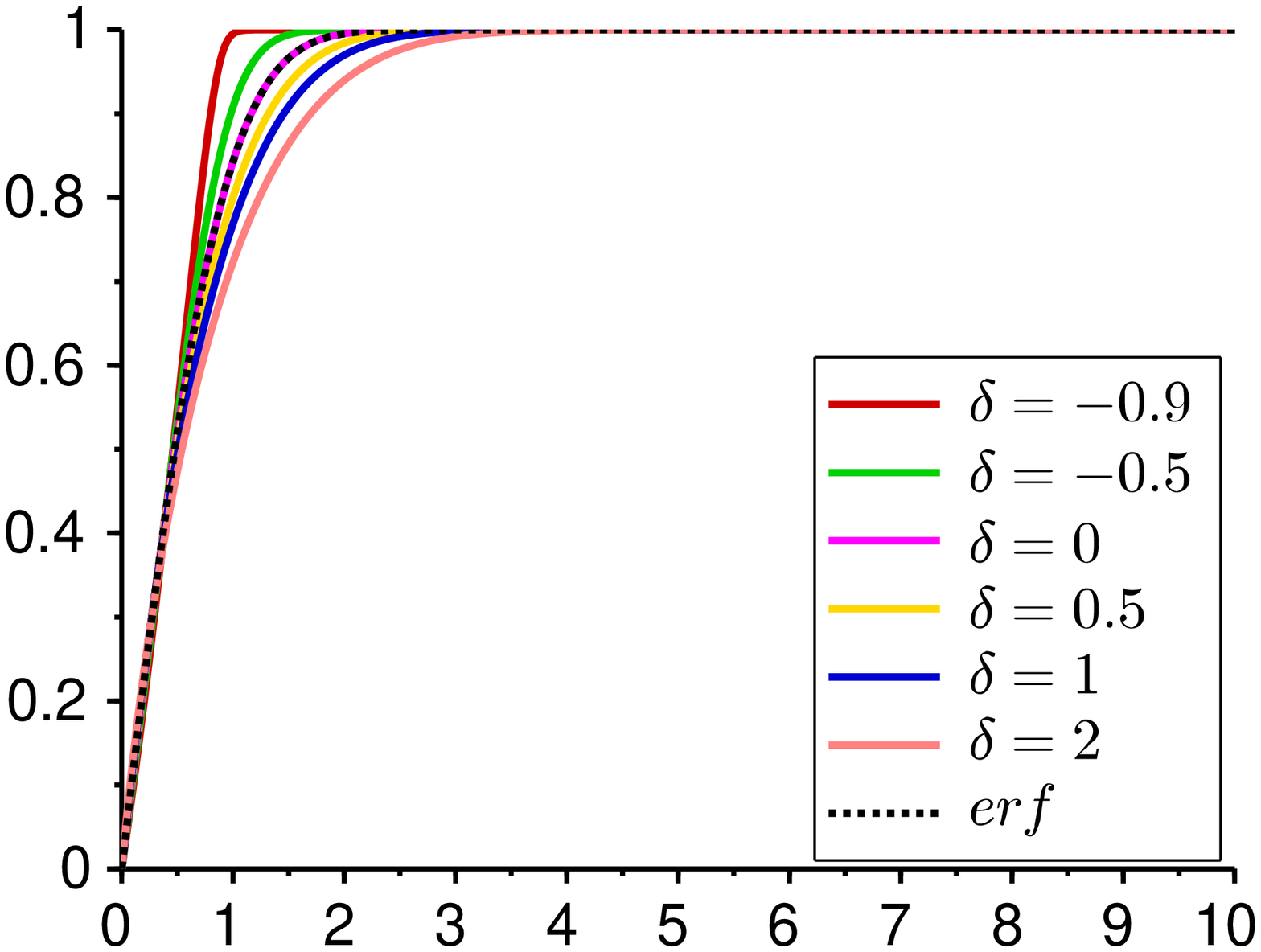}
\caption{$x\in[0,10]$}
\end{subfigure}
\end{figure}

\noindent Finally, we recall that the existence and uniqueness of 
$\Phi_\delta$ in the set of non-negative bounded analytic functions was recently proved in \cite{CeSaTa2017} for small positive values of $\delta$ (i.e. for increasing thermal conductivities that present moderate variations with respect to their initial value). Moreover, an upper bound $\delta_0$ for the parameter $\delta$ was characterized as the unique positive solution to the equation: 
\begin{equation}\label{eq:delta0}
\frac{x}{2}(1+x)^{3/2}(3+x)[1+(1+x)^{3/2}]=1\quad x>0.
\end{equation}

The approximations for the modified error function proposed in this article are based on the assumption that $\Phi_\delta$ admits a power series representation in the parameter $\delta$. More precisely, we assume that there exist functions $\varphi_n$ defined on $\mathbb{R}^+$ such that:
\begin{equation}\label{Phi}
\Phi_\delta(x)=\displaystyle\sum_{n=0}^\infty\varphi_n(x)\delta^n\quad x>0,
\end{equation}
and look for approximations $\Psi_{\delta,m}$ of the form
\begin{equation}\label{Psi}
\Psi_{\delta,m} (x)=\displaystyle\sum_{n=0}^m \varphi_n(x)\delta^n\quad x>0\quad\text{for }m\in\mathbb{N}_0.
\end{equation}

The organization of the article is as follows. First (Section \ref{Sect:Series}), we formally characterize each function $\varphi_n$ as the solution to a linear boundary value problem for a second order differential equation. The latter is homogeneous when $n=0$, but presents a non-zero source term dependent on $\varphi_k$ for $k=0,\dots, n-1$, when $n\in\mathbb{N}$. Then (Section \ref{Sect:Zero}), we present the zero order approximation $\Psi_{\delta,0}$. We find that it is given by the classical error function, and we prove that $\Phi_\delta$ uniformly converges to $\Psi_{\delta,0}=\erf$ when $\delta$ goes to zero. Since theoretical results are presented, the proof will be given only for those values of $\delta$ for which existence and uniqueness of the modified error function is known (i.e. we only consider $\delta\to 0^+$). After that (Section \ref{Sect:OneTwo}), we present the first and second order approximations $\Psi_{\delta,1}$ and $\Psi_{\delta,2}$. We obtain that $\Psi_{\delta,1}$ can be explicitly written in terms of the error and exponential functions only. By contrast, $\Psi_{\delta,2}$ involves some integrals whose values (explicit dependence on $x$) are not yet available in the literature. We analyze numerical errors between the approximations $\Psi_{\delta,1}$, $\Psi_{\delta,2}$, and the modified error function $\Phi_{\delta}$ when $\delta$ assumes small positive values, thus the  existence and uniqueness of the modified error is assured. From them, we conclude that the first order approximation is better than the approximation of order two. Moreover, we find that it is also better than the approximation of zero order. We present some plots comparing $\Psi_{\delta,1}$ and $\Phi_\delta$ for some values of $\delta>-1$, which show good agreement. Finally (Section \ref{Sect:Prop}) we restrict the arguments again to small positive values of $\delta$ and prove that the modified and classical error functions share the properties of being increasing, concave and bounded functions.  
 
\section{Formal series representation of the modified error function}\label{Sect:Series}
This Section is devoted to obtain a formal characterization of the coefficients $\varphi_n$ in the power series representation of the modified error function $\Phi_\delta$ given by (\ref{Phi}). 

Let $\delta>-1$ and $x>0$ be given. When $\Phi_\delta$ is defined by (\ref{Phi}), formal computations from equation (\ref{eq:y}) yield
\begin{equation}\label{EqPhiDelta-1}
\begin{split}
&\displaystyle\sum_{n=0}^\infty\displaystyle\sum_{m=0}^\infty
\delta^{n+m+1}\left(\varphi'_n(x)\varphi'_m(x)+
\varphi_n(x)\varphi_m''(x)\right)\\
&+
\displaystyle\sum_{n=0}^\infty\delta^n\left(\varphi''_n(x)+2x\varphi'_n(x)\right)
=0\quad x>0.
\end{split}
\end{equation} 

By introducing the following notation:
\begin{subequations}
\begin{align}
&a(x,n,m)=\varphi_n'(x)\varphi_m'(x)+\varphi_n(x)\varphi_m''(x)&x>0,\,n,m\in\mathbb{N}_0\\
&b(x,n)=\varphi_n''(x)+2x\varphi'_n(x)&x>0,\,n\in\mathbb{N}_0,
\end{align}
\end{subequations}
equation (\ref{EqPhiDelta-1}) can be written as

\begin{equation}
\displaystyle\sum_{n=1}^\infty\left(\displaystyle\sum_{k=1}^n
a(x,k-1,n-k)+b(x,n)\right)\delta^n+b(x,0)=0,\quad x>0.
\end{equation}
Therefore, the function $\Phi_\delta$ defined by (\ref{Phi}) is a formal solution to problem (\ref{Pb:y}) if and only if the functions $\varphi_n$, $n\in\mathbb{N}_0$, are such that
\begin{subequations}\label{PbPhinTodas}
\begin{align}
&\displaystyle\sum_{k=1}^na(x,k-1,n-k)+b(x,n)=0\quad x>0, \forall\,n\in\mathbb{N},
\quad b(x,0)=0, \, x>0,\\
&\varphi_n(0^+)=0\quad \forall\,n\in\mathbb{N}_0,
\hspace{1.5cm} \varphi_0(+\infty)=1,
\hspace{1cm} \varphi_n(+\infty)\quad \forall\,n\in\mathbb{N}.
\end{align}
\end{subequations}
That is, if and only if $\varphi_0$ and $\varphi_n$, $n\in\mathbb{N}$, are solutions to
\begin{subequations}\label{PbPhi0}
\begin{align}
&2x\varphi'_0(x)+\varphi_0''(x)=0&x>0\\
&\varphi_0(0^+)=0&\\
&\varphi_0(+\infty)=1&
\end{align}
\end{subequations}
and 
\begin{subequations}\label{PbPhiN}
\begin{align}
&2x\varphi'_n(x)+\varphi_n''(x)=-\displaystyle\sum_{k=1}^n\left(
\varphi'_{k-1}(x)\varphi_{n-k}'(x)+\varphi_{k-1}(x)\varphi_{n-k}''(x)\right)&x>0\\
&\varphi_n(0^+)=0&\\
&\varphi_n(+\infty)=0,&
\end{align}
\end{subequations}
respectively.
\begin{remark}
We find from (\ref{PbPhiN}a) that functions $\varphi_k$ must be known for $k=0,\dots, n-1$ in order to find $\varphi_n$.
\end{remark}

The next two Sections are dedicated to present and analyse the approximations $\Psi_{\delta,m}$ when $m=0,1,2$ and their coefficients are solutions to problems (\ref{PbPhi0}), (\ref{PbPhiN}) (see (\ref{Psi})). Each function $\Psi_{\delta,m}$ will be referred to as {\em approximation of order $m$}.

\section{Approximation of order zero}\label{Sect:Zero}

The approximation of order zero is $\Psi_{\delta,0}=\varphi_0$, where $\varphi_0$ is a solution of problem (\ref{PbPhi0}). We note that the latter coincides with (\ref{Pb:y}) when $\delta=0$. Thus, as it was already mentioned, its unique solution is the error function. Hence
\begin{equation}
\Psi_{\delta,0}(x)=\erf(x)\quad\quad x>0.
\end{equation} 

The remaining part of this Section is devoted to prove that the modified error function uniformly converges to the classical one, when the parameter $\delta$ goes to zero. We will restrict our analysis for those values of $\delta$ for which existence and uniqueness of the modified error function is known, i.e. to small positive values of $\delta$ \cite{CeSaTa2017}. Thus, our main goal will be to prove that
\begin{equation}
\epsilon_{\delta,0}\to 0\quad\quad\text{and}\quad\quad
\delta\to 0^+,
\end{equation}
where $\epsilon_{\delta,0}$ is the error between the classical and modified error functions, defined by 
\begin{equation}\label{Error0}
\epsilon_{\delta,0}=||\Phi_\delta-\erf||_\infty.
\end{equation}

The fact that the error function satisfies problem (\ref{Pb:y}) when $\delta=0$, suggests to analyse the dependence of problem (\ref{Pb:y}) on the parameter $\delta$. We begin by recalling the main result in \cite{CeSaTa2017}:

\begin{theorem}\label{Th}
Let $\delta_0$ be the only solution to equation (\ref{eq:delta0}), and let $0\leq \delta<\delta_0$ be given. Then there exist a unique solution $\Phi_\delta$ to problem (\ref{Pb:y}) in the set $K$ of all non-negative analytic functions in $\mathbb{R}_0^+$ which are bounded by 1. Moreover, $\Phi_\delta$ is given as the unique fixed point of the operator $\tau_\delta$ from $K$ to $K$ defined by
\begin{equation}\label{tau}
\tau_\delta (h)(x)=C_{\delta,h} \displaystyle\int_0^x \frac{1}{1+\delta h(\eta)}\exp\left(-2\displaystyle\int_0^\eta\frac{\xi}{1+\delta h(\xi)}d\xi\right)d\eta\quad x\geq 0,
\end{equation}
for $h\in K$, with $C_{\delta,h}$ given by
\begin{equation}\label{Ch}
C_{\delta,h}=\left(\displaystyle\int_0^{+\infty} \frac{1}{1+\delta h(\eta)}\exp\left(-2\displaystyle\int_0^\eta\frac{\xi}{1+\delta h(\xi)}d\xi\right)d\eta\right)^{-1}.
\end{equation}
\end{theorem}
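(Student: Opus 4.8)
The plan is to recast the nonlinear boundary value problem (\ref{Pb:y}) as a fixed-point equation $y=\tau_\delta(y)$ for the operator defined in (\ref{tau})--(\ref{Ch}), and then to produce the solution via the Banach fixed-point theorem. First I would verify the equivalence in both directions. Given any admissible $h$, a direct computation shows that $y=\tau_\delta(h)$ satisfies $y'(x)=C_{\delta,h}(1+\delta h(x))^{-1}\exp\left(-2\int_0^x\xi(1+\delta h(\xi))^{-1}d\xi\right)$, hence $(1+\delta h(x))y'(x)=C_{\delta,h}\exp\left(-2\int_0^x\xi(1+\delta h(\xi))^{-1}d\xi\right)$, and differentiating once more gives $[(1+\delta h)y']'=-2xy'$; setting $h=y$ recovers (\ref{eq:y}). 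The condition $y(0)=0$ holds because the integral in (\ref{tau}) vanishes at $x=0$, while $y(+\infty)=1$ holds precisely because of the normalization (\ref{Ch}). Conversely, if $y$ solves (\ref{Pb:y}), then integrating the first order equation satisfied by $p=(1+\delta y)y'$ and using the two boundary conditions to fix the integration constant yields $y=\tau_\delta(y)$. Thus solving (\ref{Pb:y}) in $K$ is equivalent to finding a fixed point of $\tau_\delta$ in $K$.

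Next I would check that $\tau_\delta$ maps the relevant set into itself. For $0\le\delta<\delta_0$ and $0\le h\le 1$ one has $1+\delta h\ge 1$, so the integrand in (\ref{tau}) is positive and, using $\int_0^\eta\xi(1+\delta h(\xi))^{-1}d\xi\ge \eta^2/(2(1+\delta))$, is dominated by the Gaussian $\exp(-\eta^2/(1+\delta))$; this guarantees convergence of the improper integral in (\ref{Ch}) and that $C_{\delta,h}$ is finite and positive. Non-negativity of $\tau_\delta(h)$ is then immediate, and boundedness by $1$ follows since the numerator integral over $[0,x]$ never exceeds the normalizing integral over $[0,+\infty)$. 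One subtlety I would flag is completeness: uniform limits of analytic functions need not be analytic, so rather than applying Banach's theorem directly in $K$ I would first obtain the fixed point in the complete space of continuous functions $h$ with $0\le h\le 1$, and then upgrade its regularity a posteriori — the fixed-point identity makes $\Phi_\delta$ as smooth as its own coefficient, so a bootstrap (or invoking analyticity of solutions of the analytic equation (\ref{eq:y})) gives $\Phi_\delta\in K$.

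The heart of the argument, and the step I expect to be the main obstacle, is the contraction estimate with a constant sharp enough to reproduce equation (\ref{eq:delta0}). Writing $F(\eta;h)=(1+\delta h(\eta))^{-1}\exp\left(-2\int_0^\eta\xi(1+\delta h(\xi))^{-1}d\xi\right)$, I would bound $|F(\eta;h_1)-F(\eta;h_2)|$ by splitting it into the prefactor and the exponential contributions: from $|(1+\delta h_1)^{-1}-(1+\delta h_2)^{-1}|\le\delta\|h_1-h_2\|_\infty$ together with the mean-value bound $|e^{-I_1}-e^{-I_2}|\le e^{-\eta^2/(1+\delta)}|I_1-I_2|\le\delta\eta^2 e^{-\eta^2/(1+\delta)}\|h_1-h_2\|_\infty$, one gets the Gaussian-weighted estimate $|F(\eta;h_1)-F(\eta;h_2)|\le\delta(1+\eta^2)e^{-\eta^2/(1+\delta)}\|h_1-h_2\|_\infty$. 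Since $\tau_\delta(h)$ is the quotient of $A(x)=\int_0^x F(\eta;h)d\eta$ by $B=\int_0^{+\infty}F(\eta;h)d\eta$, I would control it through the decomposition $\frac{A_1}{B_1}-\frac{A_2}{B_2}=\frac{A_1-A_2}{B_1}+\frac{A_2(B_2-B_1)}{B_1 B_2}$, using the lower bound $B_i\ge\sqrt{\pi}/(2(1+\delta))$, the upper bound $A_2\le\sqrt{\pi}\sqrt{1+\delta}/2$, and the explicit moment $\int_0^{+\infty}(1+\eta^2)e^{-\eta^2/(1+\delta)}d\eta=\tfrac{\sqrt{\pi}}{4}\sqrt{1+\delta}(3+\delta)$ to bound the difference $|B_1-B_2|$ and the numerator.

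Collecting these bounds, the two terms of the decomposition contribute $\tfrac{\delta}{2}(1+\delta)^{3/2}(3+\delta)$ and $\tfrac{\delta}{2}(1+\delta)^{3}(3+\delta)$ respectively, so that $\|\tau_\delta(h_1)-\tau_\delta(h_2)\|_\infty\le L(\delta)\|h_1-h_2\|_\infty$ with $L(\delta)=\tfrac{\delta}{2}(1+\delta)^{3/2}(3+\delta)\left[1+(1+\delta)^{3/2}\right]$. Since $L$ is continuous and strictly increasing on $[0,+\infty)$ with $L(0)=0$, the contraction condition $L(\delta)<1$ holds exactly for $0\le\delta<\delta_0$, where $\delta_0$ is the unique positive root of $L(\delta)=1$, i.e. of (\ref{eq:delta0}). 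Banach's theorem then yields a unique fixed point $\Phi_\delta$, giving existence and uniqueness in $K$. The genuinely delicate point is to keep every estimate tight — in particular to handle the $h$-dependence of the normalizing constant $C_{\delta,h}$ in the quotient without loss — since any slack would enlarge $L(\delta)$ and spoil the exact match with the threshold characterized by (\ref{eq:delta0}).
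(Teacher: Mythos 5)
The paper does not prove Theorem \ref{Th} itself---its ``proof'' is the citation of \cite{CeSaTa2017}---but your reconstruction is correct and follows essentially the same fixed-point route as that reference: the same operator $\tau_\delta$, a Banach contraction argument in the sup-norm, and bounds whose structure is echoed in Lemma \ref{Le:cotas} and in items i)--iv) of the proof of Theorem \ref{theoLip}, with your contraction constant $L(\delta)=\tfrac{\delta}{2}(1+\delta)^{3/2}(3+\delta)\left[1+(1+\delta)^{3/2}\right]$ reproducing equation (\ref{eq:delta0}) exactly. Your flagging of the completeness issue ($K$ is not closed under uniform limits) and the repair---contract in the complete space of continuous functions with $0\le h\le 1$, then bootstrap regularity through the fixed-point identity---is also the right way to make the argument rigorous.
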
 

\begin{proof}
See \cite{CeSaTa2017}.
\end{proof}

\begin{remark}
\begin{enumerate}
\item[]
\item In \cite{CeSaTa2017}, the case $\delta=0$ was not considered. This is because it corresponds to the classical case, in which the thermal conductivity is constant and the modified error function is the classical one. Nevertheless, besides this physical consideration, all theorems in \cite{CeSaTa2017} are still valid when $\delta=0$.
\item From \eqref{k}, we see that $dk(T)/dT=\delta k_0/(T_m-T_\infty)$. Thus, the bound on $\delta$ established by Theorem \ref{Th} to prove the existence and uniqueness of the modified error function $\Phi_\delta$ determines a necessary condition on the data of the associated Stefan problem to obtain similarity solutions. This condition establishes that the velocity of change of the thermal conductivity with respect to changes on the temperature distribution must be controlled by some multiple of the ratio between the thermal conductivity $k_0$ at $x=0$, and the difference between the phase-change and boundary temperatures, $T_m$ and $T_\infty$. In other words, that $s<\delta_0 k_0/(T_m-T_\infty)$, where $s$ is the slope in the linear dependence of $k$ on $T$.
\end{enumerate}
\end{remark}

\begin{definition}\label{Def:Lips}
We say that problem (\ref{Pb:y}) is Lipschitz continuous on the parameter $\delta$ if 
\begin{equation}
\exists\,L>0\quad/\quad\forall\,\delta_1, \delta_2 \in [0,\delta_0)
\quad:\quad
||\Phi_{\delta_1}-\Phi_{\delta_2}||_\infty\leq L|\delta_1-\delta_2|,
\end{equation}
where $\Phi_{\delta_1}$, $\Phi_{\delta_2}$ are the only solutions in $K$ to problem (\ref{Pb:y}) with parameters $\delta_1$, $\delta_2$, respectively.   
\end{definition}

Thus, if problem (\ref{Pb:y}) is Lipschitz continuous on $\delta$, we find that the modified error function $\Phi_\delta$ converges uniformly on $x>0$ to the classical error function $\erf$, when $\delta\to 0^+$. In other words, that $\epsilon_{\delta,0}\to 0$ when $\delta\to 0^+$. Before proving the Lipschitz dependence of problem (\ref{Pb:y}) on $\delta$, we introduce some preparatory results in the following: 

%
%

\begin{lemma}\label{Le:cotas}
Let $\delta_1, \delta_2 \in [0,\delta_0)$, $h, h_1, h_2\in K$ and $0\leq x\leq +\infty$ be given. The following estimations hold:
\begin{enumerate}
\item[]
\item[a)] \small $\displaystyle\bigintss_0^x\left|
\frac{\exp\left(-2\displaystyle\int_0^\eta\frac{\xi}{1+\delta_1 h(\xi)}d\xi\right)}{1+\delta_1 h(\eta)}-
\frac{\exp\left(-2\displaystyle\int_0^\eta\frac{\xi}{1+\delta_2 
h(\xi)}d\xi\right)}{1+\delta_2 h(\eta)}\right|d\eta\leq$ \linebreak 
$\displaystyle \dfrac{\sqrt{\pi}}{4} (1+\delta_0)^{1/2}(3+\delta_0)| \delta_1 
-\delta_2 |$ 
\normalsize
\item[]
\item[b)] $\displaystyle \left| \frac{1}{C_{h_1,\delta_1}}-\frac{1}{C_{h_2,\delta_2}} \right| \leq 
\frac{\sqrt{\pi}}{4} (1+\delta_0)^{1/2}(3+\delta_0)\left(\delta_0||h_1-h_2||_{\infty}+ |\delta_1 -\delta_2 |\right).$
\end{enumerate}
\end{lemma}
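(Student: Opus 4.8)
The plan is to reduce both estimates to a single pointwise bound on the integrand and then integrate against a Gaussian weight. Write the common integrand as a product $F(\delta,h,\eta)=g_{\delta,h}(\eta)\,G_{\delta,h}(\eta)$, where $g_{\delta,h}(\eta)=\frac{1}{1+\delta h(\eta)}$ and $G_{\delta,h}(\eta)=\exp\!\left(-2\int_0^\eta \frac{\xi}{1+\delta h(\xi)}\,d\xi\right)$, so that the integrands in a) are $F(\delta_1,h,\cdot)$ and $F(\delta_2,h,\cdot)$, while the quantities in b) are $\int_0^\infty F(\delta_i,h_i,\cdot)\,d\eta$. The starting observation is that for $h\in K$ and $\delta\in[0,\delta_0)$ one has $1\le 1+\delta h(\eta)\le 1+\delta_0$, which gives $g_{\delta,h}\le 1$ together with the crucial lower bound $2\int_0^\eta \frac{\xi}{1+\delta h}\,d\xi\ge \frac{\eta^2}{1+\delta_0}$, hence $G_{\delta,h}(\eta)\le \exp(-\eta^2/(1+\delta_0))$. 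This decaying factor is what will make the resulting integral finite and, in fact, equal to the advertised constant.

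For part a) I would use the splitting $F(\delta_1,h)-F(\delta_2,h)=g_{\delta_1,h}\,(G_{\delta_1,h}-G_{\delta_2,h})+(g_{\delta_1,h}-g_{\delta_2,h})\,G_{\delta_2,h}$ and estimate each factor. Subtracting reciprocals gives $|g_{\delta_1,h}-g_{\delta_2,h}|\le |\delta_1-\delta_2|\,h(\eta)\le|\delta_1-\delta_2|$. For the exponentials I would apply the elementary mean-value estimate $|e^{-A}-e^{-B}|\le e^{-\min(A,B)}|A-B|$ together with $|A-B|=2\left|\int_0^\eta \xi(g_{\delta_1,h}-g_{\delta_2,h})\,d\xi\right|\le|\delta_1-\delta_2|\,\eta^2$ and the lower bound above, obtaining $|G_{\delta_1,h}-G_{\delta_2,h}|\le |\delta_1-\delta_2|\,\eta^2\exp(-\eta^2/(1+\delta_0))$. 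Combining, the integrand is dominated by $|\delta_1-\delta_2|\,(1+\eta^2)\exp(-\eta^2/(1+\delta_0))$, and integrating over $(0,\infty)$ using the two Gaussian moments $\int_0^\infty e^{-a\eta^2}\,d\eta=\tfrac12\sqrt{\pi/a}$ and $\int_0^\infty \eta^2 e^{-a\eta^2}\,d\eta=\tfrac14\sqrt{\pi/a^3}$ with $a=1/(1+\delta_0)$ reproduces exactly $\frac{\sqrt\pi}{4}(1+\delta_0)^{1/2}(3+\delta_0)$.

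Part b) I would obtain by the same mechanism after a triangle-inequality split that separates the change in $\delta$ from the change in $h$, namely $F(\delta_1,h_1)-F(\delta_2,h_2)=\left[F(\delta_1,h_1)-F(\delta_2,h_1)\right]+\left[F(\delta_2,h_1)-F(\delta_2,h_2)\right]$. The first bracket integrates to the bound of part a) applied with the fixed profile $h_1$, contributing the term with $|\delta_1-\delta_2|$. In the second bracket $\delta=\delta_2$ is fixed while the profile varies, and the reciprocal estimate now reads $|g_{\delta_2,h_1}-g_{\delta_2,h_2}|\le \delta_2\,|h_1(\eta)-h_2(\eta)|\le \delta_0\,||h_1-h_2||_\infty$; repeating the exponential estimate verbatim produces the term with $\delta_0\,||h_1-h_2||_\infty$, and summing the two gives precisely the stated inequality.

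The reciprocal bounds and the Gaussian integrals are routine. The one step that must be handled with care — and from which the exact shape of the constant emerges — is keeping the factor $\exp(-\eta^2/(1+\delta_0))$ attached to the exponential difference rather than bounding $|e^{-A}-e^{-B}|$ crudely by $|A-B|$: only with this factor does the $\eta^2$ growth coming from $|A-B|$ integrate to a finite value, and the resulting $(1+\eta^2)$ weight is exactly what conspires, via the two moments, to produce $(3+\delta_0)=(1+\delta_0)+2$.
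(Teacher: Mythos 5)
Your proof is correct and follows essentially the same route as the paper's: the same splitting of the integrand into an exponential-difference term plus a reciprocal-difference term, the same mean-value estimate on $t\mapsto e^{-t}$ that keeps the Gaussian factor $\exp\left(-\eta^2/(1+\delta_0)\right)$ attached, and the same two Gaussian moments combining to give $(3+\delta_0)=(1+\delta_0)+2$. The only (minor) difference is in part b): the paper disposes of the $h$-variation term (with $\delta$ fixed) by citing Lemma 2.1 of \cite{CeSaTa2017}, whereas you reprove that bound by repeating the same mechanism, which makes your argument self-contained but is not a genuinely different approach.
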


\begin{proof}
\begin{enumerate}
\item[]
 \item[a)] 
Let $f$ be the real function defined on $\mathbb{R}^+_0$ by $f(x)=\exp(-2x)$, and
\begin{equation*}
x_1= \int_0^\eta\frac{\xi}{1+\delta_1 h(\xi)}d\xi, \quad x_2= \int_0^\eta\frac{\xi}{1+\delta_2 h(\xi)}d\xi\quad (\eta>0 \text{ fixed}).
\end{equation*}

It follows from the Mean Value Theorem applied to function $f$ that
\begin{equation*}
\left| f(x_1)-f(x_2)\right|=|f'(u)| |x_1-x_2|,
\end{equation*}
where $u$ is a real number between $x_1$ and $x_2$. Without any lost of generality, we assume that $\delta_1\geq \delta_2$. Then, $x_1\leq x_2$ and we find
\begin{align*}
&|f'(u)|\leq |f'(x_1)|\leq  2 \exp \left( -\frac{\eta^2}{1+\delta_0}\right)\text{ since }||h||_\infty\leq 1,\\
& |x_1-x_2| \leq \frac{\eta^2}{2} |\delta_1 - \delta_2|.
\end{align*} 

Therefore, 
\begin{equation*}
\left|f(x_1)-f(x_2)\right| \leq  |\delta_1 - \delta_2|\eta^2  \exp \left( -\frac{\eta^2}{1+\delta_0}\right).
\end{equation*}

Then, we find
\begin{equation*}
\begin{split}
\left|
\frac{f(x_1)}{1+\delta_1 h(\eta)}-\frac{f(x_2)}{1+\delta_2 h(\eta)}
\right|&=
\left|
\frac{f(x_1)-f(x_2)}{1+\delta_1 h(\eta)}+\frac{f(x_2)h(\eta)(\delta_2-\delta_1)}{(1+\delta_1h(\eta))(1+\delta_2h(\eta))}\right|\\
&\leq |f(x_1)-f(x_2)|+|f(x_2)||\delta_1-\delta_2|\\
&\leq |\delta_1 - \delta_2|\exp\left(-\frac{\eta^2}{1+\delta_0}\right)(\eta^2+1).
\end{split}
\end{equation*}
The final bound is now obtained by integrating the last expression.

\item[b)] First, we find the estimation
\begin{equation}\label{invCdem}
\begin{split}
&\left| \frac{1}{C_{\delta_1,h_1}}-\frac{1}{C_{\delta_2,h_2}} \right| \leq
\left| \frac{1}{C_{\delta_1,h_1}}-\frac{1}{C_{\delta_1,h_2}} \right|\\
&\\
&+ \small \left|\bigintss_0^{+\infty}
\frac{\exp\left(-2\displaystyle\int_0^\eta\frac{\xi}{1+\delta_1 h_2(\xi)}d\xi\right)}{1+\delta_1 h_2(\eta)}-
\frac{\exp\left(-2\displaystyle\int_0^\eta\frac{\xi}{1+\delta_2 h_2(\xi)}d\xi\right)}{1+\delta_2 h_2(\eta)}d\eta\right|.
\end{split}
\end{equation}
Taking into consideration that the first term in the right hand side of (\ref{invCdem}) is bounded by $\frac{\sqrt{\pi}}{4}\delta_0 (1+\delta_0)^{1/2}(3+\delta_0)||h_1-h_2||_{\infty}$ (see \cite[Lemma 2.1]{CeSaTa2017}), the desired bound follows from (\ref{invCdem}) and item a). 
\end{enumerate} 
\end{proof}

%
%
\begin{theorem}\label{theoLip}
Problem (\ref{Pb:y}) is Lipschitz continuous on the parameter $\delta$.
\end{theorem}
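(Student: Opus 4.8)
The plan is to exploit the fixed-point characterization of $\Phi_\delta$ furnished by Theorem \ref{Th}. Fix $\delta_1,\delta_2\in[0,\delta_0)$ and write $\Phi_{\delta_i}=\tau_{\delta_i}(\Phi_{\delta_i})$. Note that $\|\Phi_{\delta_1}-\Phi_{\delta_2}\|_\infty\le 2$ is finite since both functions lie in $K$, which will legitimize the absorption step at the end. I would split the difference by inserting the mixed operator $\tau_{\delta_2}(\Phi_{\delta_1})$:
\begin{equation*}
\Phi_{\delta_1}-\Phi_{\delta_2}
=\bigl(\tau_{\delta_1}(\Phi_{\delta_1})-\tau_{\delta_2}(\Phi_{\delta_1})\bigr)
+\bigl(\tau_{\delta_2}(\Phi_{\delta_1})-\tau_{\delta_2}(\Phi_{\delta_2})\bigr).
\end{equation*}
The first bracket measures the variation of $\tau$ in its parameter with the function argument held fixed (this is exactly what Lemma \ref{Le:cotas} controls), while the second measures the variation in the function argument at a fixed parameter (controlled by the contraction property of $\tau_{\delta_2}$ established in \cite{CeSaTa2017}).

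For the first bracket I would fix $h=\Phi_{\delta_1}\in K$ and write, from \eqref{tau} and \eqref{Ch}, $\tau_\delta(h)=C_{\delta,h}\,J_\delta(h)$, where $J_\delta(h)(x)$ denotes the integral factor in \eqref{tau} and $C_{\delta,h}=1/J_\delta(h)(+\infty)$. Adding and subtracting $C_{\delta_1,h}J_{\delta_2}(h)$ gives
\begin{equation*}
\tau_{\delta_1}(h)-\tau_{\delta_2}(h)
=C_{\delta_1,h}\bigl(J_{\delta_1}(h)-J_{\delta_2}(h)\bigr)
+\bigl(C_{\delta_1,h}-C_{\delta_2,h}\bigr)J_{\delta_2}(h).
\end{equation*}
Before estimating I would record uniform two-sided bounds on $C_{\delta,h}$ valid for all $h\in K$ and $\delta\in[0,\delta_0)$: since $1\le 1+\delta h\le 1+\delta_0$ one has $\exp(-\eta^2)\le\exp(-2\int_0^\eta\frac{\xi}{1+\delta h(\xi)}d\xi)\le\exp(-\eta^2/(1+\delta_0))$, whence integrating yields $\tfrac{\sqrt\pi}{2(1+\delta_0)}\le 1/C_{\delta,h}\le\tfrac{\sqrt\pi}{2}(1+\delta_0)^{1/2}$. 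With these in hand, Lemma \ref{Le:cotas}a bounds $|J_{\delta_1}(h)-J_{\delta_2}(h)|$ by a multiple of $|\delta_1-\delta_2|$ uniformly in $x$, while Lemma \ref{Le:cotas}b taken with $h_1=h_2=h$ (so that the $\|h_1-h_2\|_\infty$ term drops) bounds $|1/C_{\delta_1,h}-1/C_{\delta_2,h}|$, and hence $|C_{\delta_1,h}-C_{\delta_2,h}|=C_{\delta_1,h}C_{\delta_2,h}\,|1/C_{\delta_1,h}-1/C_{\delta_2,h}|$, also by a multiple of $|\delta_1-\delta_2|$. Since $J_{\delta_2}(h)(x)\le J_{\delta_2}(h)(+\infty)=1/C_{\delta_2,h}$ is uniformly bounded (the integrand is non-negative), both terms are $\le C\,|\delta_1-\delta_2|$ for a constant $C=C(\delta_0)$ independent of $x$ and $h$; taking the supremum in $x$ gives $\|\tau_{\delta_1}(\Phi_{\delta_1})-\tau_{\delta_2}(\Phi_{\delta_1})\|_\infty\le C\,|\delta_1-\delta_2|$.

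For the second bracket I would invoke the contraction estimate from \cite{CeSaTa2017}: there is $q<1$ with $\|\tau_{\delta_2}(h_1)-\tau_{\delta_2}(h_2)\|_\infty\le q\,\|h_1-h_2\|_\infty$ for all $h_1,h_2\in K$, so that $\|\tau_{\delta_2}(\Phi_{\delta_1})-\tau_{\delta_2}(\Phi_{\delta_2})\|_\infty\le q\,\|\Phi_{\delta_1}-\Phi_{\delta_2}\|_\infty$. Combining the two brackets yields $\|\Phi_{\delta_1}-\Phi_{\delta_2}\|_\infty\le C\,|\delta_1-\delta_2|+q\,\|\Phi_{\delta_1}-\Phi_{\delta_2}\|_\infty$, and absorbing the last term gives the claim with $L=C/(1-q)$. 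The main obstacle I anticipate is ensuring the contraction constant $q$ can be chosen strictly below $1$ uniformly over the parameter range: the left-hand side of \eqref{eq:delta0} governs the contraction factor and tends to $1$ as $\delta\to\delta_0^-$, so a single $L$ on all of $[0,\delta_0)$ requires care, whereas working on $[0,\delta_0-\varepsilon]$ (where $q\le q(\delta_0-\varepsilon)<1$) is immediate and already suffices for the convergence $\epsilon_{\delta,0}\to 0$ sought in Section \ref{Sect:Zero}. The only other delicate point is routing the constant-factor difference $C_{\delta_1,h}-C_{\delta_2,h}$ through the uniform bounds above, since Lemma \ref{Le:cotas}b estimates the reciprocals rather than the constants themselves.
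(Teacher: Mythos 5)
Your overall strategy---fixed-point characterization, splitting into a parameter-variation bracket and a function-variation bracket, then absorption---is the same as the paper's, and your treatment of the first bracket is correct: the two-sided bounds on $1/C_{\delta,h}$, the use of Lemma \ref{Le:cotas}a, and of Lemma \ref{Le:cotas}b with $h_1=h_2$, together with the cancellation $|C_{\delta_1,h}-C_{\delta_2,h}|\,J_{\delta_2}(h)(x)\le C_{\delta_1,h}\bigl|1/C_{\delta_1,h}-1/C_{\delta_2,h}\bigr|$, reproduce the paper's estimates. The gap is the one you yourself flag, and it is a genuine gap with respect to the statement: Definition \ref{Def:Lips} demands a single constant $L$ valid for \emph{all} $\delta_1,\delta_2\in[0,\delta_0)$, and the black-box contraction constant $q(\delta_2)$ of $\tau_{\delta_2}$ inherited from \cite{CeSaTa2017} is precisely the left-hand side of \eqref{eq:delta0} evaluated at $\delta_2$, so $q(\delta_2)\to 1$ as $\delta_2\to\delta_0^-$ and your $L=C/(1-q(\delta_2))$ blows up; retreating to $[0,\delta_0-\varepsilon]$ proves a strictly weaker theorem than the one stated (enough for $\epsilon_{\delta,0}\to 0$, but not for Theorem \ref{theoLip}).

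What closes the gap in the paper is that it does \emph{not} invoke the contraction as a black box: it estimates the function-variation terms with the same sharp pairing you already used in your parameter bracket. In the term $\bigl|C_{\delta_1,\Phi_{\delta_1}}-C_{\delta_2,\Phi_{\delta_2}}\bigr|\int_0^{+\infty}H(\Phi_{\delta_2},\delta_2)(\eta)\,d\eta$ the integral equals $1/C_{\delta_2,\Phi_{\delta_2}}$ \emph{exactly}, so the product collapses to $C_{\delta_1,\Phi_{\delta_1}}\bigl|1/C_{\delta_1,\Phi_{\delta_1}}-1/C_{\delta_2,\Phi_{\delta_2}}\bigr|$; bounding that integral separately by $\tfrac{\sqrt{\pi}}{2}(1+\delta_0)^{1/2}$ is the lossy step that generates the extra factor $[1+(1+x)^{3/2}]/2$ in the contraction constant of \cite{CeSaTa2017}. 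With the tight bookkeeping, the coefficient multiplying $\|\Phi_{\delta_1}-\Phi_{\delta_2}\|_\infty$ in the combined estimate (the paper's (\ref{CotaLip2})) is $C=\delta_0(1+\delta_0)^{3/2}(3+\delta_0)$, and since $\delta_0$ solves \eqref{eq:delta0} this equals $\frac{2}{1+(1+\delta_0)^{3/2}}<1$ strictly. Absorption then yields $L=\frac{C}{\delta_0(1-C)}$ uniformly on all of $[0,\delta_0)$. In short, the slack factor $[1+(1+\delta_0)^{3/2}]/2>1$ built into the defining equation of $\delta_0$ is exactly what keeps the absorption coefficient away from $1$ up to $\delta_0$; apply to your second bracket the very cancellation you used in your first, and your argument goes through on the whole interval.
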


\begin{proof}
Let $\delta_1, \delta_2\in [0,\delta_0)$ be given, and let $\Phi_{\delta_1}, \Phi_{\delta_2}\in K$ be the solutions to problem (\ref{Pb:y}) with parameters $\delta_1$, $\delta_2$, respectively. 

Exploiting the fact that $\Phi_{\delta_i}$ is the fixed point of the operator $\tau_{\delta_i}$ defined by (\ref{tau}), $i=1,2$, we find
\begin{equation}\label{CotaLip}
\begin{split}
|\Phi_{\delta_1}(x)- \Phi_{\delta_2}(x)| &\leq
\left|  C_{\delta_1,\Phi_{\delta_1}}- C_{\delta_2,\Phi_{\delta_2}} \right| \int_0^\infty H(\Phi_{\delta_2},\delta_2)(\eta) d \eta\\
&+C_{\delta_1,\Phi_{\delta_1}} 
\left| \int_0^x H(\Phi_{\delta_1},\delta_1)(\eta)-H(\Phi_{\delta_2},\delta_1)(\eta) d \eta \right|\\
&+C_{\delta_1,\Phi_{\delta_1}} 
\left| \int_0^x H(\Phi_{\delta_2},\delta_1)(\eta)-H(\Phi_{\delta_2},\delta_2)(\eta) d \eta \right|\quad\forall\,x>0,
\end{split}
\end{equation}
where we have written
\begin{equation*}
H(h,\delta)(x)=\frac{\exp\left(-2\displaystyle\int_0^x\frac{\xi}{1+\delta h(\xi)}d\xi\right)}{1+\delta h(x)}\quad x>0,
\end{equation*}
with $h=\Phi_{\delta_i},\,\delta=\delta_j,\,i,j=1,2.$

From the estimations
\begin{enumerate}
\item[i)] $C_{\delta_1,\Phi_{\delta_1}}
\leq \frac{2\left(1+\delta_0\right)}{\sqrt{\pi}}$
\item[ii)] $\left|C_{\delta_1,\Phi_{\delta_1}}-C_{\delta_2,\Phi_{\delta_2}}\right|=
\left|\frac{1}{C_{\delta_1,\Phi_{\delta_1}}}-\frac{1}{C_{\delta_2,\Phi_{\delta_2}}}\right|C_{\delta_1,\Phi_{\delta_1}}C_{\delta_2,\Phi_{\delta_2}}$
\item[]$\leq 
\frac{1}{\sqrt{\pi}}(1+\delta_0)^{5/2}(3+\delta_0)\left(\delta_0||\Phi_{\delta_1}-\Phi_{\delta_2}||_\infty+|\delta_1-\delta_2|\right)$\\
(see Lemma \ref{Le:cotas})
\item[iii)] $\left| \int_0^x 
H(\Phi_{\delta_1},\delta_1)(\eta)-H(\Phi_{\delta_2},\delta_1)(\eta) d \eta 
\right|\leq$ 
\item[] $\displaystyle 
\frac{\sqrt{\pi}}{4}\delta_0(1+\delta_0)^{1/2}(3+\delta_0)||\Phi_{ \delta_1} 
-\Phi_{\delta_2}||_\infty$\\
(see \cite[Lemma 2.1]{CeSaTa2017})
\item[iv)] $\left| \int_0^x H(\Phi_{\delta_2},\delta_1)(\eta)-H(\Phi_{\delta_2},\delta_2)(\eta)  d \eta \right|\leq
\frac{\sqrt{\pi}}{4} (1+\delta_0)^{1/2}(3+\delta_0)| \delta_1 -\delta_2 |$\\ (see Lemma \ref{Le:cotas}),
\end{enumerate}
and (\ref{CotaLip}), we obtain
\begin{equation} \label{CotaLip2}
\left|\Phi_{\delta_1}(x)-\Phi_{\delta_2}(x)\right|
\leq C||\Phi_{\delta_1}-\Phi_{\delta_2}||_\infty+\frac{C}{\delta_0}|\delta_1-\delta_2|\quad\forall\,x>0,
\end{equation}
where $C=\delta_0(1+\delta_0)^{3/2}(3+\delta_0)$. 

Since $\delta_0$ is the solution to equation (\ref{eq:delta0}), we find
\begin{equation*}
C=\frac{2}{1+(1+\delta_0)^{3/2}}.
\end{equation*}
Thus, $0<C<1$. From this and (\ref{CotaLip2}), we obtain
\begin{equation*}
||\Phi_{\delta_1}-\Phi_{\delta_2}||_\infty\leq L|\delta_1-\delta_2|\quad\text{with }L=\frac{C}{\delta_0(1-C)}>0.
\end{equation*}
\end{proof}

In conclusion, we have found that the approximation of zero order $\Psi_{\delta,0}$ is the classical error function. Furthermore, for those values of $\delta$ for which existence and uniqueness of the modified error function $\Phi_\delta$ is known, we found that $\Phi_\delta$ uniformly converges to $\Psi_{\delta,0}=\erf$ on $x>0$. In particular, the latter suggest that $\Phi_\delta\simeq\Psi_{\delta,0}=\erf$ for small positive values of $\delta$, in agreement to Figure \ref{Fig:FEM}.

\section{Approximations of order one and two}\label{Sect:OneTwo}

The approximations of order one and two are given by
\begin{equation*}
\Psi_{\delta,1}=\varphi_0+\varphi_1\delta
\quad\quad\text{and}\quad\quad \Psi_{\delta,1}=\varphi_0+\varphi_1\delta+\varphi_2\delta^2,
\end{equation*}
respectively, where $\varphi_0$ is the solution to problem (\ref{PbPhi0}) and $\varphi_1$, $\varphi_2$ are the solutions to problem (\ref{PbPhiN}) for $n=1$ and $n=2$. From Section \ref{Sect:Zero}, we know that $\varphi_0=\erf$. It enables us to define the source term in the differential equation of problem (\ref{PbPhiN}) for $n=1$. This problem can be explicitly solved and, from its solution, it can be also defined and solved problem (\ref{PbPhiN}) for $n=2$. We summarize these results in the following

\begin{theorem}
\begin{enumerate}
\item[]
\item [a)] The only solution $\varphi_1$ to problem (\ref{PbPhiN}) for $n=1$ is given by
\begin{equation}\label{Phi1}
\begin{split}
\varphi_1(x)&=\left(\frac{1}{2}-\frac{1}{\pi}\right)\erf(x)+\frac{1}{\pi}\left\{1-\exp(-2x^2)\right\}\\
&-\frac{1}{\sqrt{\pi}}x\erf(x)\exp(-x^2)-\frac{1}{2}\erf^2(x)\quad x>0.
\end{split}
\end{equation}
\item [b)] The only solution $\varphi_2$ to problem (\ref{PbPhiN}) for $n=2$ is given by
\begin{equation}\label{Phi2}
\varphi_2(x)=\frac{\sqrt{\pi}}{2}g_2(x)\left[\displaystyle\int_0^x\erfc(y)\exp(y^2)dy-
\frac{\sqrt{\pi}}{2}\erfc(x)\erfi(x)\right]\quad x>0,
\end{equation}
where $g_2$, $\erfc$, $\erfi$ are the real functions defined in $\mathbb{R}^+$ by
\begin{subequations}
\begin{flalign*}
g_2(x)=&\frac{16}{\pi}\erf(x)\exp(-2x^2)+
\frac{4}{\pi}\left(\frac{2}{\pi}-1\right)\exp(-2x^2)\\
&-\frac{12}{\pi\sqrt{\pi}}x\exp(-3x^2)
+\left(\frac{4}{\sqrt{\pi}}-\frac{8}{\pi\sqrt{\pi}}\right)x\erf(x)\exp(-x^2)\\
&-\frac{12}{\sqrt{\pi}}x\erf^2(x)\exp(-x^2)+\frac{4}{\pi\sqrt{\pi}}x\exp(-x^2)\\
&-\frac{8}{\pi}x^2\erf(x)\exp(-2x^2)+
\frac{4}{\sqrt{\pi}}x^3\erf^2(x)\exp(-x^2)\\
\erfc(x)&=1-\erf(x)\\
\erfi(x)&=-i\erf(ix)=\frac{2}{\sqrt{\pi}}\displaystyle\int_0^x\exp(\xi^2)d\xi
\quad\text{($i$: imaginary unit)}.
\end{flalign*}
\end{subequations}
\end{enumerate}
\end{theorem}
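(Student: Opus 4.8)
The plan is to treat (\ref{PbPhiN}) for $n=1,2$ as explicitly solvable linear boundary value problems, using the fact that $\varphi_0=\erf$ and, for $n=2$, the formula found for $\varphi_1$. The starting point I would use is that the source in (\ref{PbPhiN}a) is an exact derivative: since $\varphi'_{k-1}\varphi'_{n-k}+\varphi_{k-1}\varphi''_{n-k}=(\varphi_{k-1}\varphi'_{n-k})'$, the right-hand side equals $-G_n'$ with $G_n=\sum_{k=1}^n\varphi_{k-1}\varphi'_{n-k}$, and $G_n(0)=0$ because every $\varphi_j$ vanishes at the origin. The homogeneous operator $\varphi\mapsto\varphi''+2x\varphi'$ has independent solutions $1$ and $\erf$; imposing $\varphi_n(0^+)=0$ and $\varphi_n(+\infty)=0$ kills both, which gives uniqueness for each problem, so it suffices to exhibit one solution.

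To produce it, I would multiply through by the integrating factor $e^{x^2}$, turning the equation into $(e^{x^2}\varphi_n')'=-e^{x^2}G_n'$. Integrating once, integrating the right-hand side by parts, and using $G_n(0)=0$ yields $e^{x^2}\varphi_n'(x)=C_n-e^{x^2}G_n(x)+2\int_0^x t e^{t^2}G_n(t)\,dt$ for a constant $C_n$. A second integration starting at $0$ automatically enforces $\varphi_n(0^+)=0$, the Gaussian factors in $G_n$ guarantee convergence at $+\infty$, and the condition $\varphi_n(+\infty)=0$ then pins down $C_n$. The theorem thus reduces to evaluating the resulting integrals.

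For part (a) this evaluation is completely elementary. With $G_1=\varphi_0\varphi_0'=\tfrac{2}{\sqrt\pi}\erf(x)e^{-x^2}$, the integrands reduce to $e^{-x^2}$, $\erf$, and $t\erf(t)$, all of which integrate in closed form after one or two integrations by parts (the only nonstandard step being $\int_0^x t^2e^{-t^2}\,dt=-\tfrac{x}{2}e^{-x^2}+\tfrac{\sqrt\pi}{4}\erf(x)$). Collecting terms and fixing $C_1$ by $\varphi_1(+\infty)=0$ should reproduce (\ref{Phi1}); as an independent check I would verify directly that (\ref{Phi1}) has limits $0$ at both $0$ and $+\infty$ and satisfies the differential equation.

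For part (b) the same machinery applies with $G_2=(\varphi_0\varphi_1)'$, but here lies the main obstacle: not every integrand is elementary. After performing all integrations by parts that close --- their contributions being exactly what assembles into the prefactor $g_2$ --- one is left with a genuinely non-elementary integral, which I would organize as $\int_0^x\erfc(y)e^{y^2}\,dy$, together with a companion $\erfc\,\erfi$ term. A useful simplification is that the bracket in (\ref{Phi2}) equals $\int_0^x e^{-t^2}\erfi(t)\,dt$ (differentiate it and use $\erfc'=-\tfrac{2}{\sqrt\pi}e^{-x^2}$, $\erfi'=\tfrac{2}{\sqrt\pi}e^{x^2}$), which both explains the appearance of the $\erfc\,\erfi$ term and makes the final verification that (\ref{Phi2}) solves (\ref{PbPhiN}) with $n=2$ tractable. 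The bulk of the effort is the careful bookkeeping that separates the closed-form part $g_2$ from this irreducible integral and confirms the boundary condition at infinity.
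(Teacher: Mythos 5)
Your proposal is correct and is essentially the same route as the paper's: the paper's entire proof is a one-sentence appeal to standard ODE results (the citation \cite{PoZa1995}), and your scheme --- rewriting the source as $-G_n'$ with $G_n=\sum_{k=1}^n\varphi_{k-1}\varphi_{n-k}'$, multiplying by the integrating factor $e^{x^2}$, integrating twice, fixing the constant by the condition at $+\infty$, and getting uniqueness from the homogeneous solutions $1$ and $\erf$ --- is precisely that standard argument carried out explicitly. Your key identities check out (in particular, carrying out your plan for $n=1$ does reproduce (\ref{Phi1}) exactly, and the bracket in (\ref{Phi2}) does equal $\int_0^x e^{-t^2}\erfi(t)\,dt$), so your write-up supplies the detail the paper delegates to the handbook.
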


\begin{proof}
It follows from standard results in ordinary differential equations (see, e.g. \cite{PoZa1995}).
\end{proof}

Plots for $\varphi_n$, $n=0,1,2$, are shown in Figure \ref{Fig:Coeff}.

\begin{figure}[H] 
\caption{{\em First three coefficients of the power series representation of 
the modified error function $\Phi_\delta$ (see (\ref{Phi})).}}
\label{Fig:Coeff}
\centering
\begin{subfigure}{0.3\textwidth}
\includegraphics[scale=0.25]{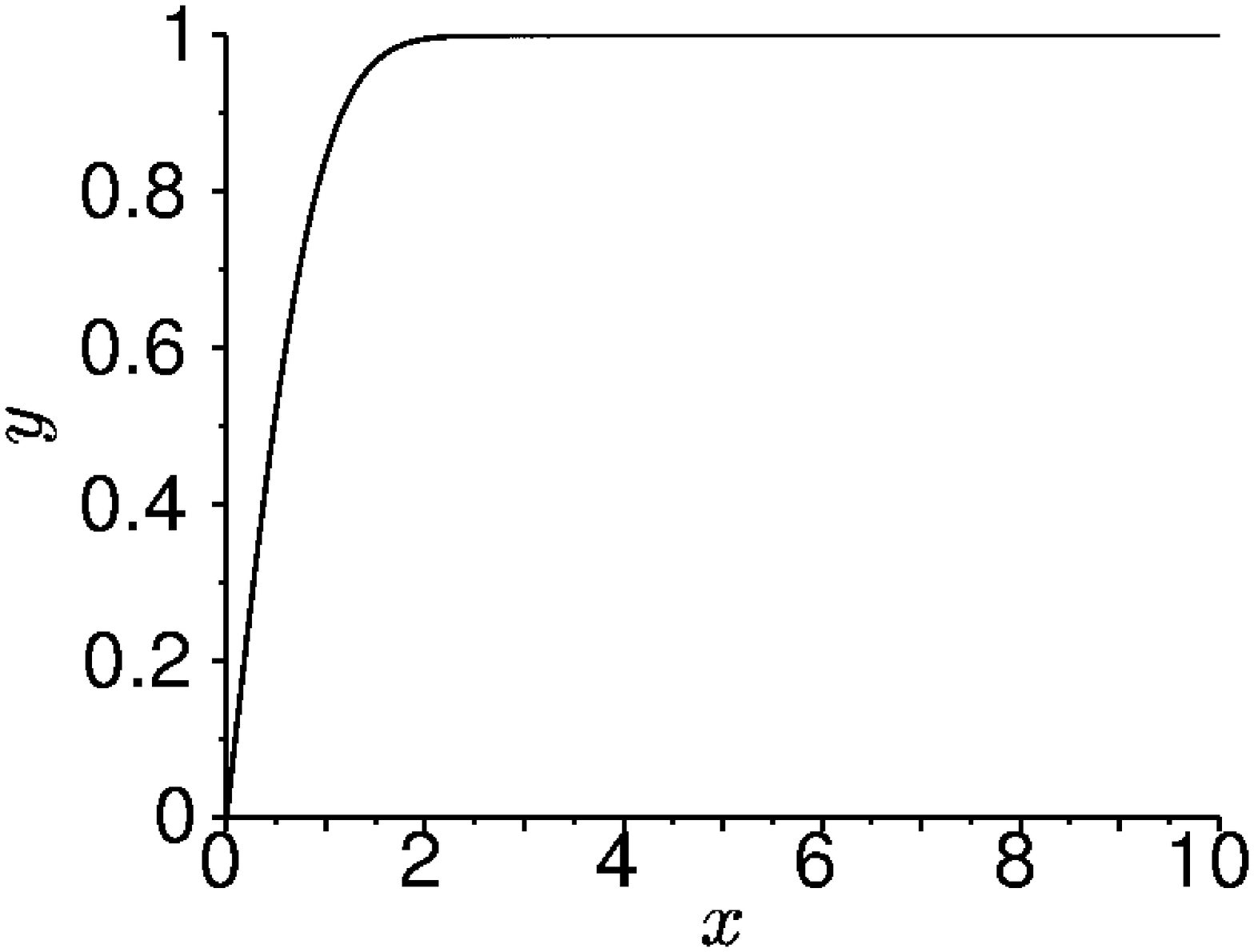}
\caption{$\varphi_0=\erf$}
\end{subfigure}
\begin{subfigure}{0.3\textwidth}
\includegraphics[scale=0.25]{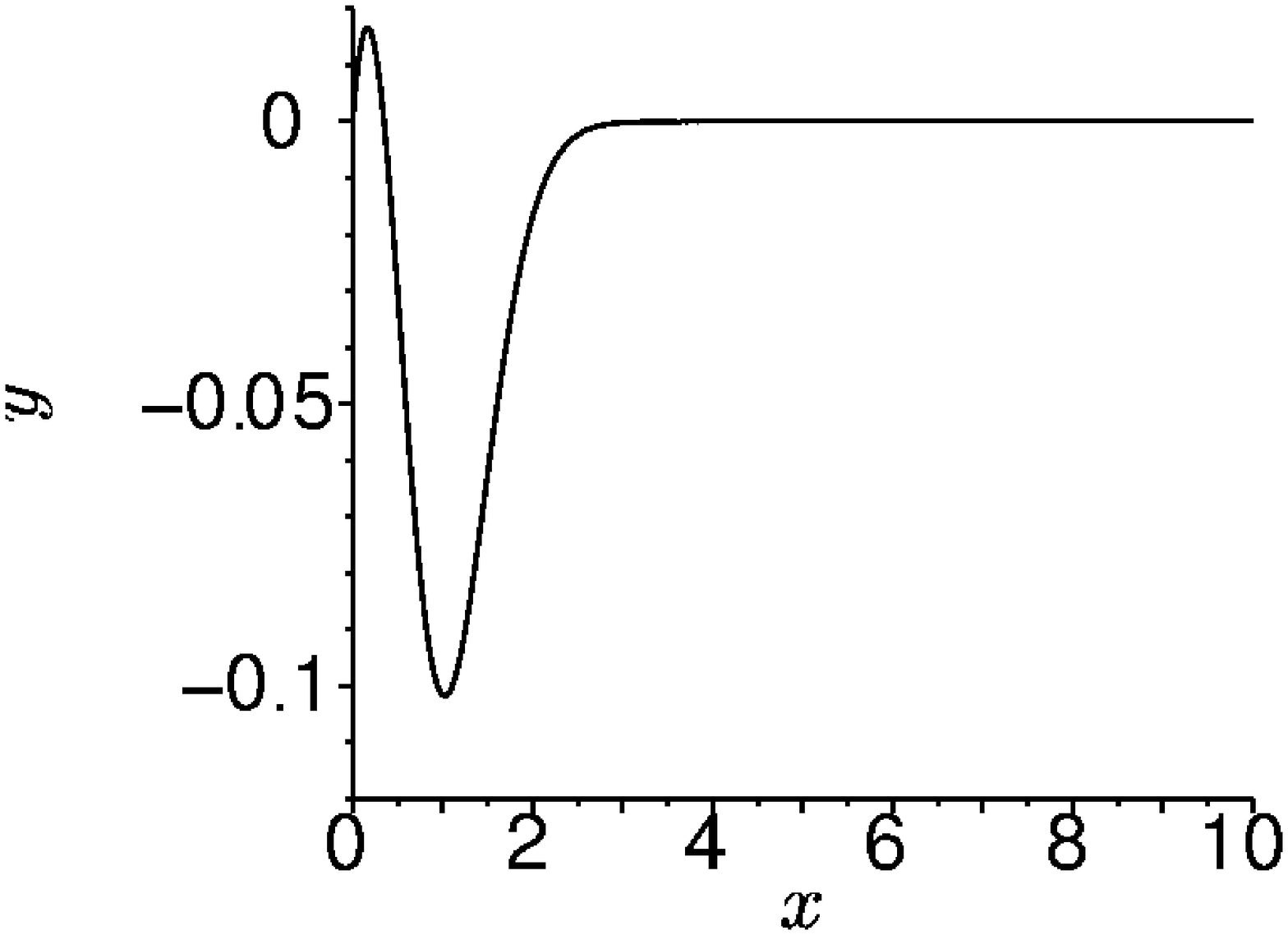}
\caption{$\varphi_1$}
\end{subfigure}
\begin{subfigure}{0.3\textwidth}
\includegraphics[scale=0.25]{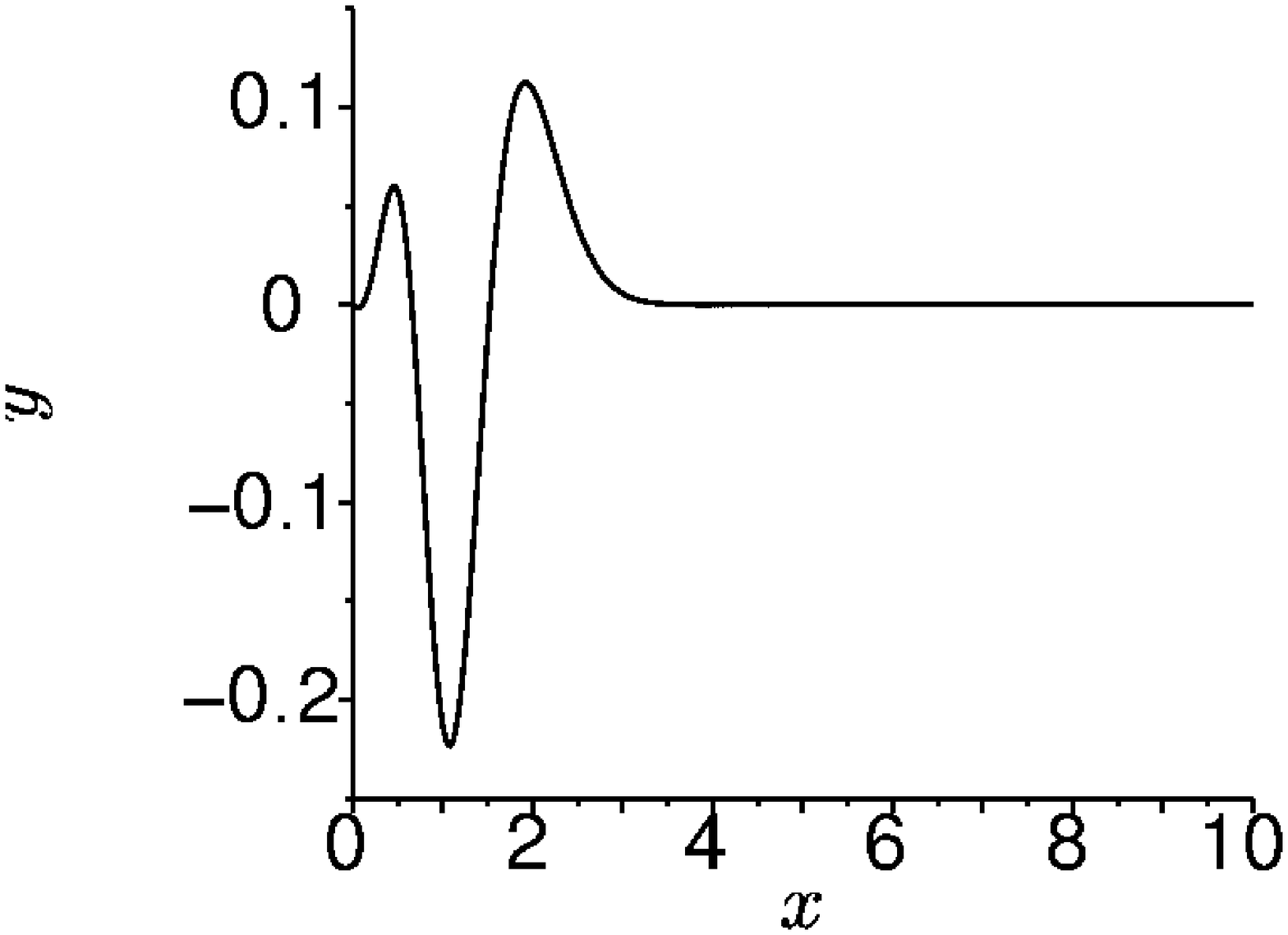}
\caption{$\varphi_2$}
\end{subfigure}
\end{figure}

We will now investigate the relation between the approximations $\Psi_{\delta,1}$, $\Psi_{\delta,2}$, and the modified error function $\Phi_\delta$. The analysis will be again limited to those values of $\delta$ for which it is known the existence and uniqueness of $\Phi_\delta$, i.e. to $0<\delta<\delta_0\simeq 0.2037$ \cite{CeSaTa2017}. In contrast to the analysis presented in Section \ref{Sect:Zero}, investigations here will be based on numerical computations. 

Numerical values for $\Phi_\delta$ were obtained by solving problem (\ref{Pb:y}) through the routine \texttt{bvodeS} implemented in Scilab. The problem was solved for the domain $[0,10]$, by considering a uniform mesh $\mathcal{P}$ with step size $10^{-2}$. 

Let $\mathcal{E}_{\delta,m}$ be the {\em discrete error} between $\Phi_\delta$ and $\Psi_{\delta,m}$, defined by
\begin{equation}
\mathcal{E}_{\delta,m}=\max\{\left|\Psi_{\delta,m}(x)-\Phi_\delta(x)\right|\,:\,
x\in\mathcal{P}\}\quad \text{for }m=0,1,2.
\end{equation}  

Figure \ref{Fig:Comparison}a shows some plots of $\mathcal{E}_{\delta,m}$ for $m=0,1,2$ and $\delta\in[0,0.2]\subset[0,\delta_0)$. On one hand, we find that $\Psi_{\delta,1}$ and $\Psi_{\delta,2}$ are better approximations of $\Phi_\delta$ than $ \Psi_{\delta,0}$. On the other hand, we also find that $\Psi_{\delta,1}$ is better than $\Psi_{\delta,2}$. This, together with the fact that $\Psi_{\delta,1}$ admits an explicit representation in terms of error and exponential functions only (in contrast to $\Psi_{\delta,2}$, which involves some integrals that can not be explicitly computed), turns $\Psi_{\delta,1}$ the best approximation among those proposed in this article. Figure \ref{Fig:Comparison}b shows the comparison between the 
$\Psi_{\delta,1}$ and the modified error function $\Phi_\delta$, for $\delta=0.2$. Though we are not able to find a complete explanation of why $\Psi_{\delta,1}$ approximates better $\Phi_{\delta}$ than $\Psi_{\delta,2}$, we suggest that the numerical implementation of the integrals in the definition of $\varphi_2$ might be introducing non-negligible perturbations.   
 
\begin{figure}[H]  
\caption{{\em Comparisons between the modified error function $\Phi_{\delta}$ 
and its approximations $\Psi_{\delta,m}$ for $m=0,1,2$.}}
\label{Fig:Comparison}
\centering
\begin{subfigure}{0.4\textwidth}
\hspace*{-7.5mm}\includegraphics[scale=0.25]{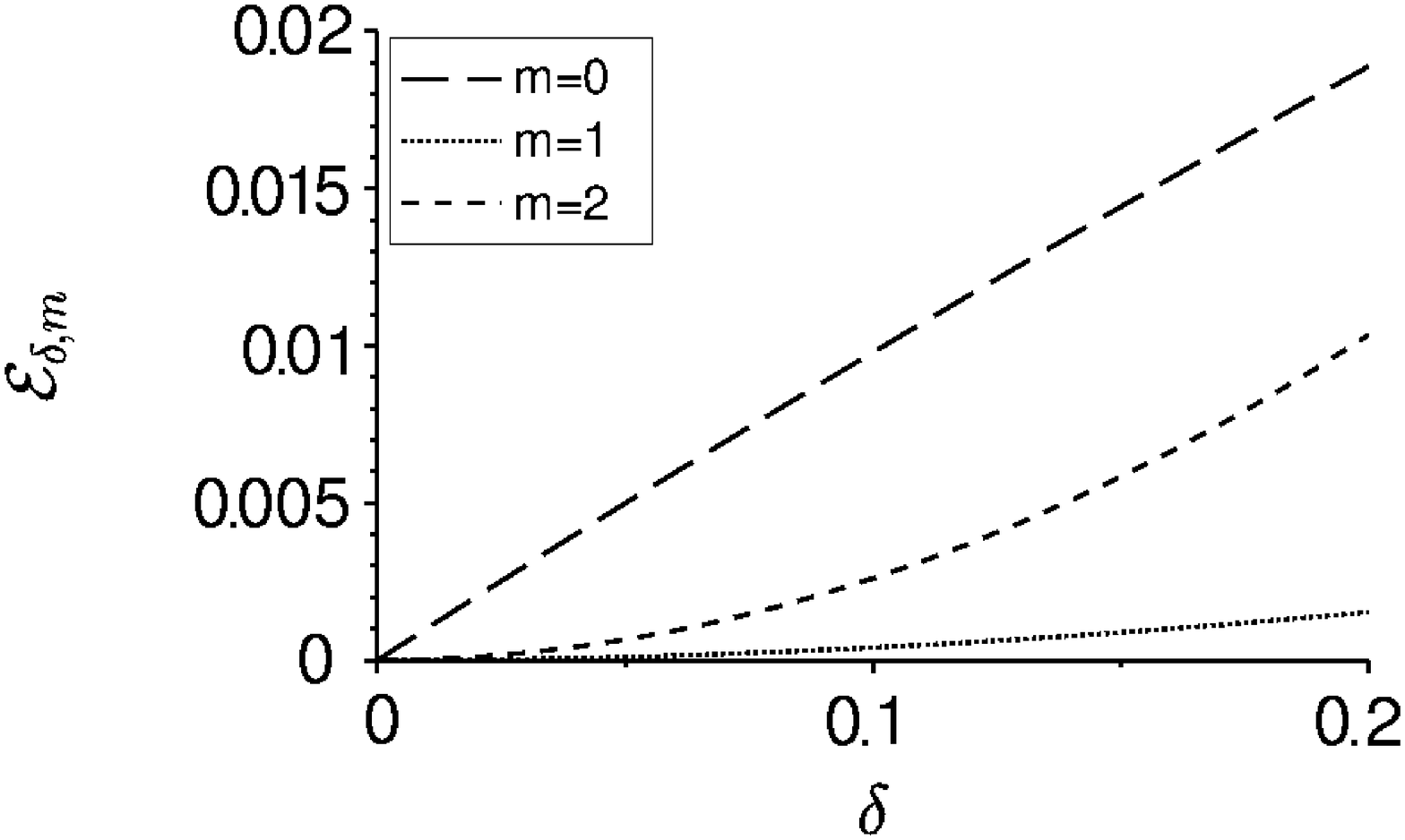}
\caption{ Error $\mathcal{E}_{\delta,m}$ for $m=0,1,2$ and 
$\delta\in[0,0.2]$.}
\end{subfigure}
\hspace{2cm}
\begin{subfigure}{0.42\textwidth}
\includegraphics[scale=0.25]{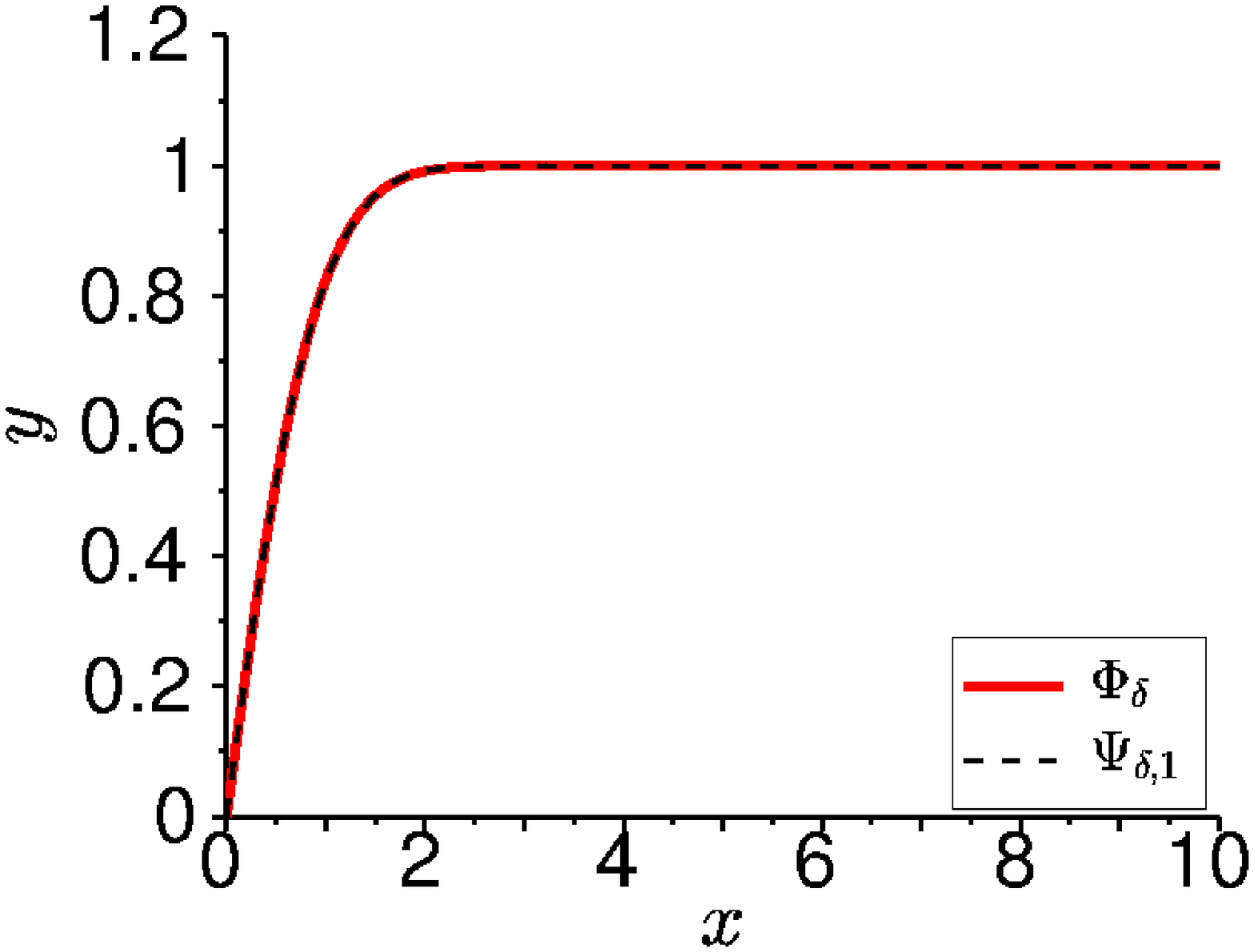}
\caption{ Modified error function $\Phi_\delta$ and its approximation of 
first order $\Psi_{\delta,1}$ for $\delta=0.2$.}
\end{subfigure}
\end{figure}

Finally, in Figure \ref{Fig:ComparisonBis} we present plots for the modified error function $\Phi_\delta$ and the approximation of first order $\Psi_{\delta,1}$ for different values of $\delta>-1$. Even when they do not belong to the interval $[0,\delta_0)$  over which theoretical results on existence and uniqueness of $\Phi_\delta$ are known, very good agreement is obtained. This enforces the former conclusions of being $\Psi_{\delta,1}$ the best approximation of $\Phi_\delta$, among $\Psi_{\delta,m}$ for $m=0,1,2$.

\begin{figure}[H]   
\caption{{\em Modified error function $\Phi_\delta$ and its approximation of 
first order $\Psi_{\delta,1}$ for  $\delta=-0.9,$ $-0.5$, $0.5, 1, 1.5, 
2$.}}
\label{Fig:ComparisonBis}
\centering
\begin{subfigure}{0.4\textwidth}
\includegraphics[scale=0.25]{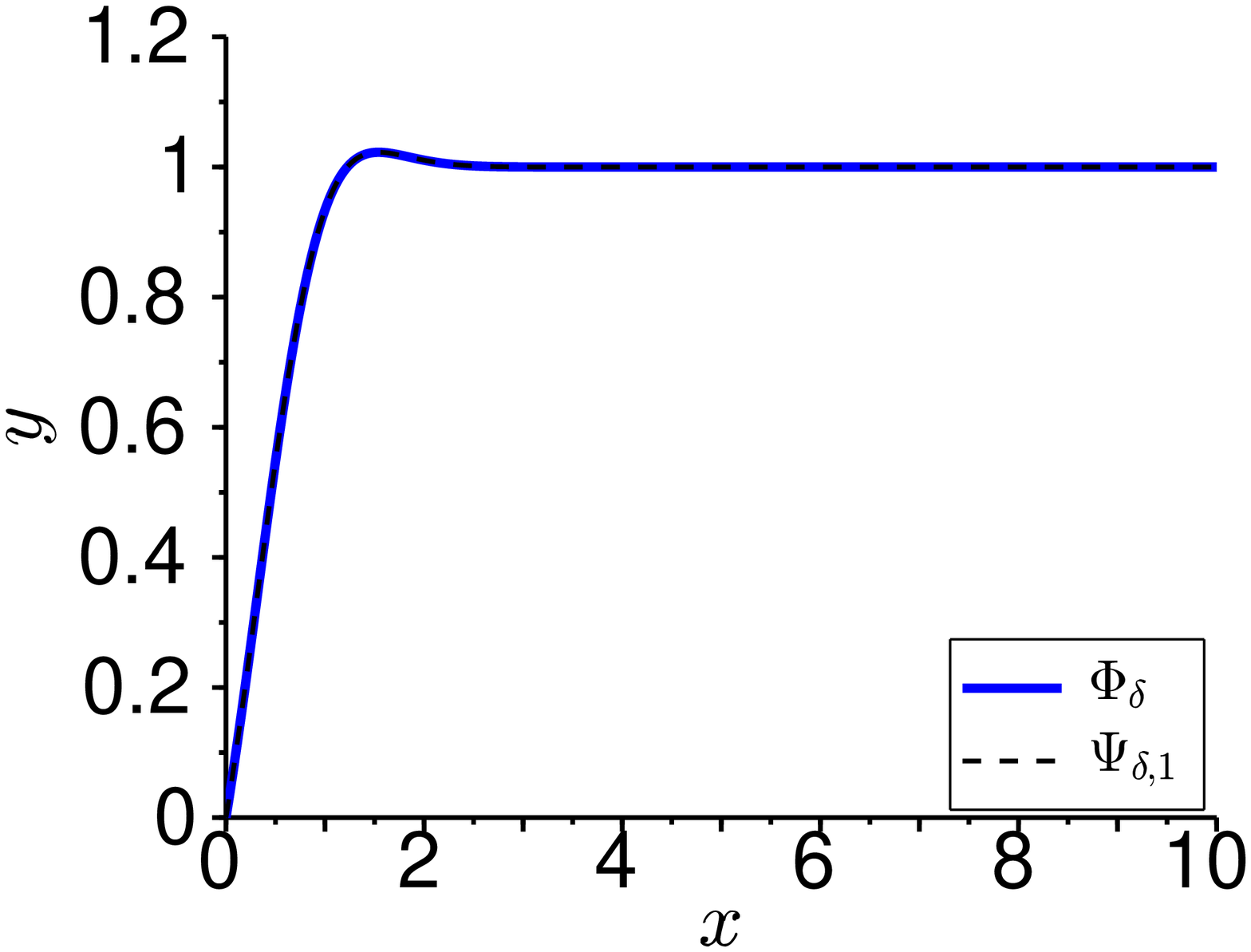}
\caption{ $\delta=-0.9$}
\end{subfigure}
\hspace{2cm}
\begin{subfigure}{0.4\textwidth}
\includegraphics[scale=0.25]{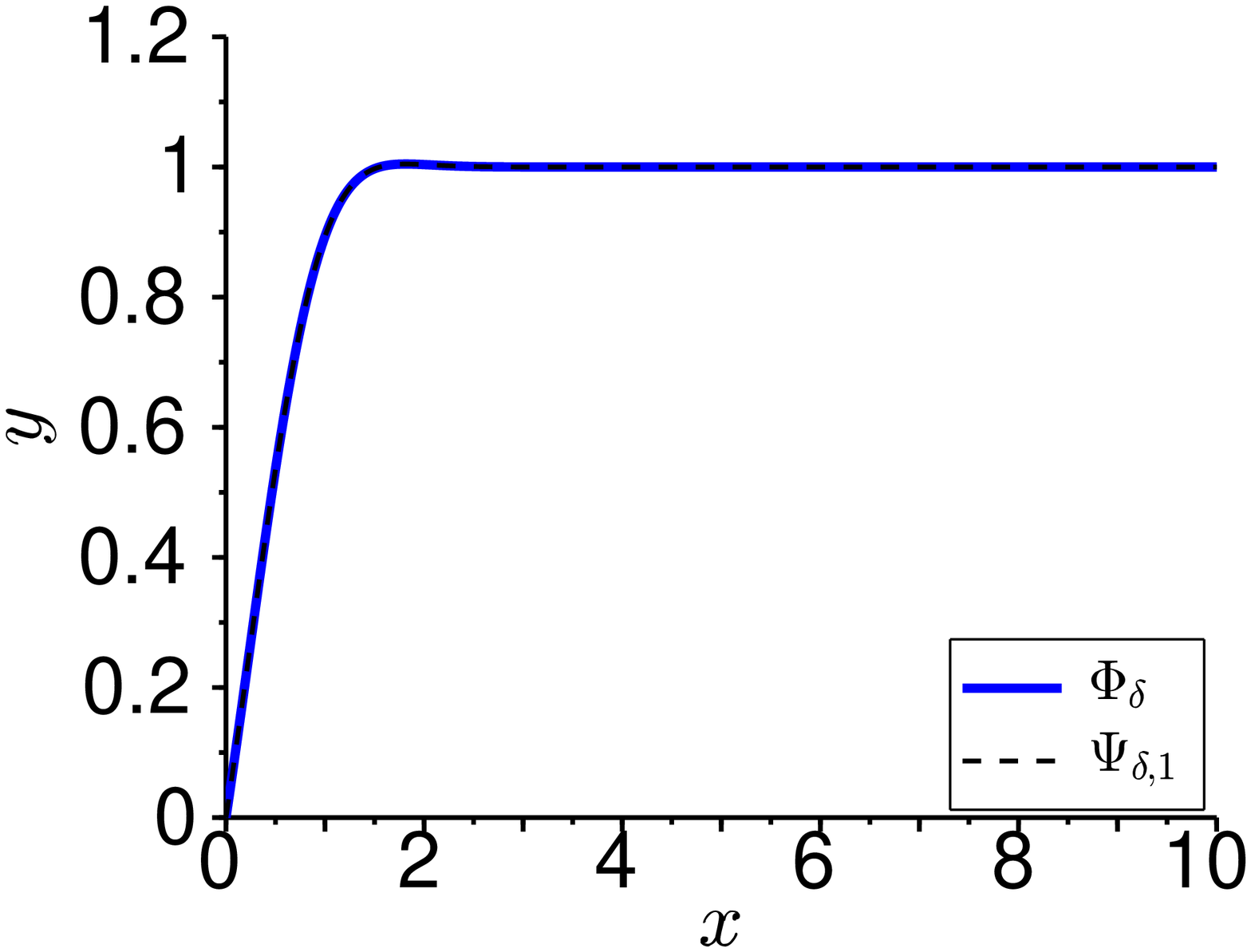}
\caption{ $\delta=-0.5$}
\end{subfigure}\\
\begin{subfigure}{0.4\textwidth}
\includegraphics[scale=0.25]{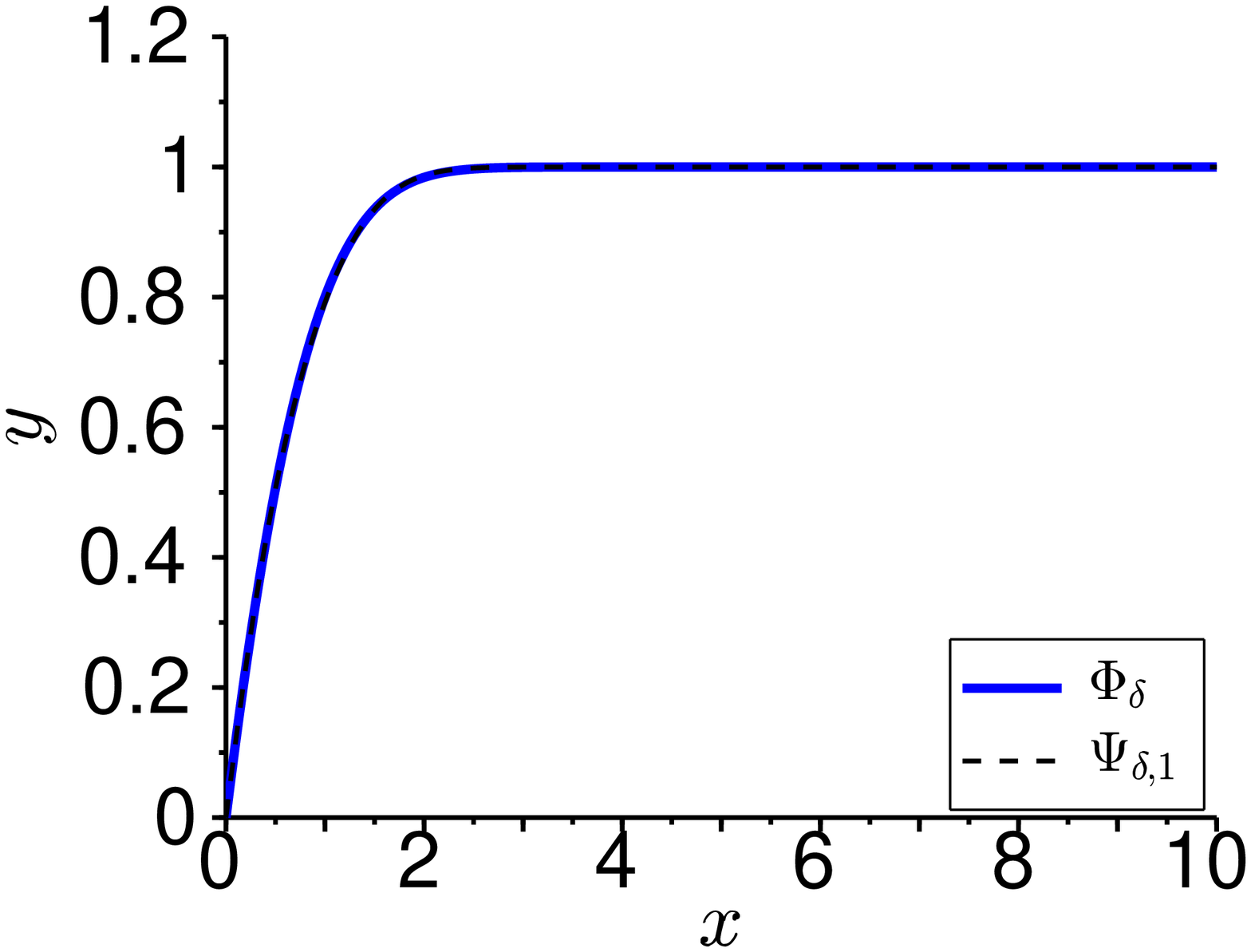}
\caption{ $\delta=0.5$}
\end{subfigure}
\hspace{2cm}
\begin{subfigure}{0.4\textwidth}
\includegraphics[scale=0.25]{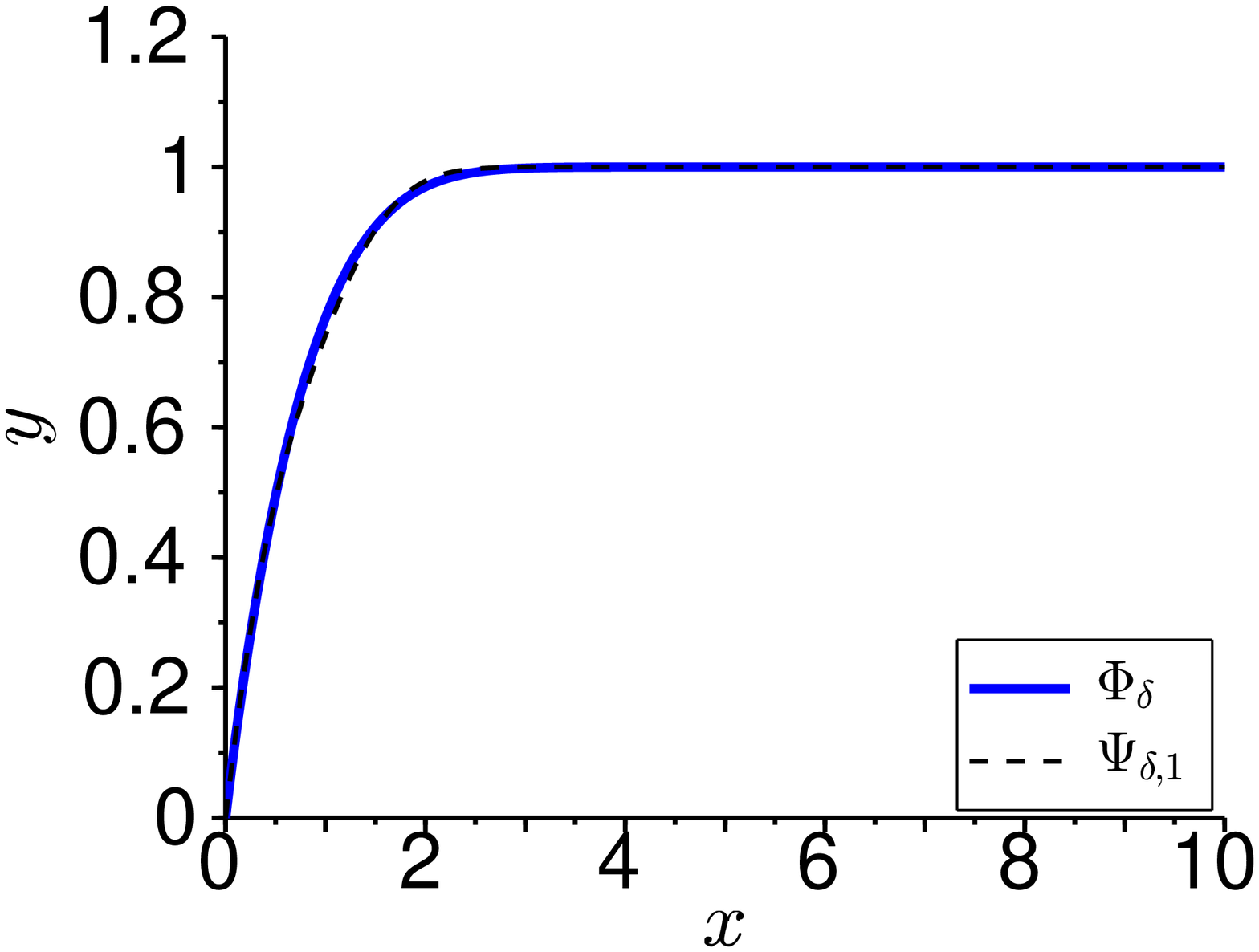}
\caption{ $\delta=1$}
\end{subfigure}\\
\begin{subfigure}{0.4\textwidth}
\includegraphics[scale=0.25]{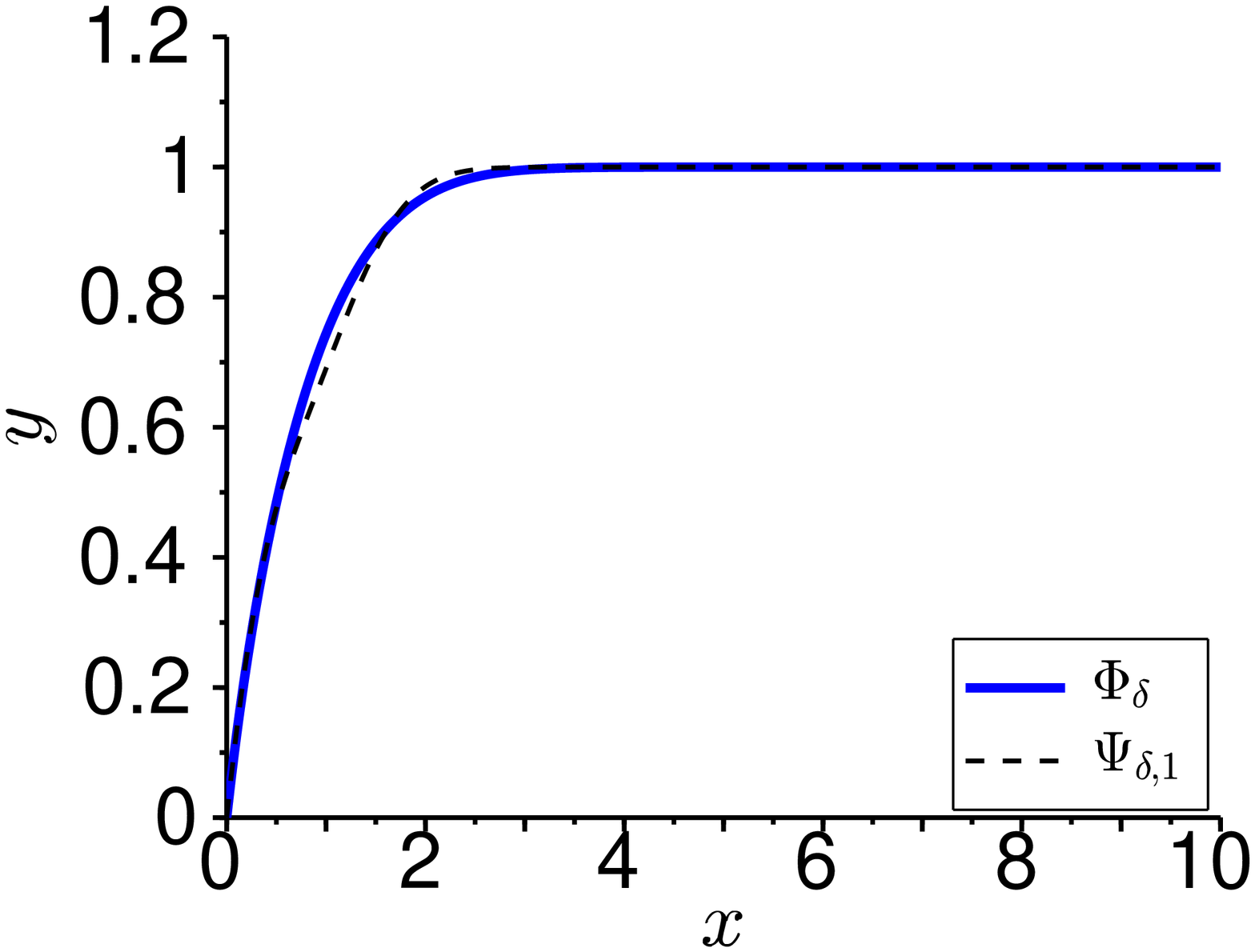}
\caption{ $\delta=1.5$}
\end{subfigure}
\hspace{2cm}
\begin{subfigure}{0.4\textwidth}
\includegraphics[scale=0.25]{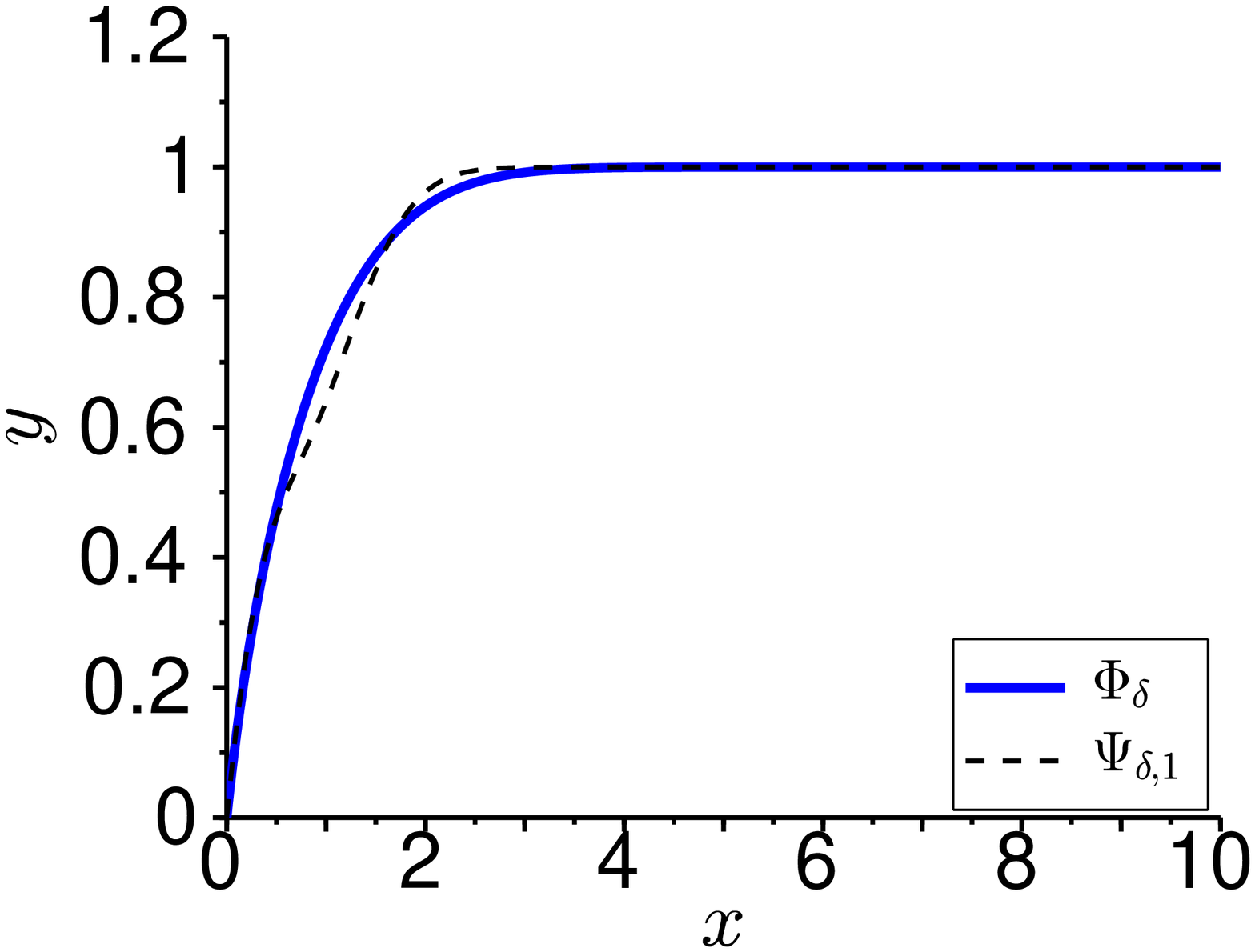}
\caption{ $\delta=2$}
\end{subfigure}

\end{figure}

\section{Properties of the modified error function}\label{Sect:Prop}
We end this article by proving that the modified error function $\Phi_\delta$ found in \cite{CeSaTa2017} shares some basic properties with the classical error function $\erf$. More precisely, those of being an increasing concave non-negative and bounded function.

\begin{theorem}\label{Th:Prop}
If $0<\delta<\delta_0$, then the only solution $\Phi_\delta$ in $K$ to problem (\ref{Pb:y}) satisfies the following properties:
\begin{equation}\label{Prop:Phi}
0 \leq \Phi_\delta(x) \leq 1, 
\quad\quad
\Phi_\delta'(x)>0,
\quad\quad
\Phi_\delta''(x)<0\quad\quad\forall\,x>0.
\end{equation}
\end{theorem}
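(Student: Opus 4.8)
The plan is to establish the three properties in the order boundedness, monotonicity, concavity, leaning on the fixed-point characterization of $\Phi_\delta$ from Theorem \ref{Th} for the first two and on the differential equation (\ref{eq:y}) itself for the third. The boundedness $0\leq\Phi_\delta(x)\leq 1$ requires essentially no work: by Theorem \ref{Th}, for $0<\delta<\delta_0$ the function $\Phi_\delta$ belongs to the set $K$ of non-negative analytic functions on $\mathbb{R}_0^+$ bounded by $1$, so the estimate holds by the very definition of $K$.

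For monotonicity I would exploit that $\Phi_\delta$ is the fixed point of $\tau_\delta$, that is, $\Phi_\delta=\tau_\delta(\Phi_\delta)$. Writing out this identity via (\ref{tau}) and differentiating with respect to $x$ (legitimate since $\Phi_\delta$ is analytic and the integrand is continuous on $[0,x]$) yields
\begin{equation*}
\Phi_\delta'(x)=C_{\delta,\Phi_\delta}\,\frac{1}{1+\delta\Phi_\delta(x)}\exp\left(-2\int_0^x\frac{\xi}{1+\delta\Phi_\delta(\xi)}d\xi\right).
\end{equation*}
Here $C_{\delta,\Phi_\delta}>0$, since by (\ref{Ch}) it is the reciprocal of a positive integral; moreover $1+\delta\Phi_\delta(x)\geq 1>0$ because $\delta>0$ and $\Phi_\delta\geq 0$; and the exponential is positive. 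Every factor on the right being strictly positive, I conclude $\Phi_\delta'(x)>0$ for all $x>0$, which is the monotonicity claim.

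For concavity I would read the sign of $\Phi_\delta''$ directly off equation (\ref{eq:y}). Expanding the bracketed term gives
\begin{equation*}
\delta\bigl(\Phi_\delta'(x)\bigr)^2+\bigl(1+\delta\Phi_\delta(x)\bigr)\Phi_\delta''(x)+2x\Phi_\delta'(x)=0,
\end{equation*}
and solving for the second derivative produces
\begin{equation*}
\Phi_\delta''(x)=-\frac{\Phi_\delta'(x)\bigl(\delta\Phi_\delta'(x)+2x\bigr)}{1+\delta\Phi_\delta(x)}.
\end{equation*}
By the previous step $\Phi_\delta'(x)>0$, and since $\delta>0$, $x>0$, and $1+\delta\Phi_\delta(x)>0$, both the numerator and the denominator are strictly positive, whence $\Phi_\delta''(x)<0$ for all $x>0$.

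The only point demanding genuine care is the strict positivity of $\Phi_\delta'$, because once that is secured the concavity follows from the differential equation by a purely algebraic sign count with no estimation whatsoever. That positivity rests entirely on the explicit integral formula for $\Phi_\delta'$ supplied by the fixed-point representation of Theorem \ref{Th}; the main obstacle, such as it is, amounts to justifying the differentiation of that integral representation, which is valid precisely because $\Phi_\delta$ is analytic and the integrand is continuous and bounded on each interval $[0,x]$. I therefore expect no substantive difficulty beyond assembling these observations in the right order.
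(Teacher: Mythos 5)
Your proposal is correct, and for the one step that requires real work---the strict positivity of $\Phi_\delta'$---it takes a genuinely different route from the paper. The paper argues by contradiction using analyticity: if $\Phi_\delta'(x_0)=0$ at some $x_0>0$, then writing the equation as $(1+\delta y)y''+\delta(y')^2+2xy'=0$ and differentiating repeatedly forces $\Phi_\delta^{(n)}(x_0)=0$ for all $n$, hence $\Phi_\delta\equiv 0$ by analyticity, contradicting $\Phi_\delta(+\infty)=1$; since $\Phi_\delta'$ never vanishes and the boundary values go from $0$ to $1$, it must be positive. You instead differentiate the fixed-point identity $\Phi_\delta=\tau_\delta(\Phi_\delta)$ from Theorem \ref{Th} and obtain the explicit formula
\begin{equation*}
\Phi_\delta'(x)=\frac{C_{\delta,\Phi_\delta}}{1+\delta\Phi_\delta(x)}\exp\left(-2\int_0^x\frac{\xi}{1+\delta\Phi_\delta(\xi)}\,d\xi\right),
\end{equation*}
whose factors are manifestly positive (note $C_{\delta,\Phi_\delta}>0$ requires the integral in (\ref{Ch}) to be finite as well as positive, which follows since the integrand is dominated by $\exp\bigl(-\eta^2/(1+\delta_0)\bigr)$; this is implicit in $\tau_\delta$ being well defined on $K$). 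Your argument is more direct and yields quantitative information (an explicit expression for the derivative), at the cost of using the full strength of the integral representation; the paper's argument uses only that $\Phi_\delta$ is an analytic solution of the ODE in $K$, so it would apply to any such solution regardless of how it was constructed. The boundedness step (membership in $K$) and the concavity step (solving the expanded equation for $\Phi_\delta''$ and counting signs) are identical in both proofs.
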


\begin{proof}
The first property in (\ref{Prop:Phi}) is a direct consequence of the fact that $\Phi_\delta$ belongs to $K$. In order to prove the second one, we start by showing that $\Phi_\delta'(x)\neq 0$ for all $x>0$. We will assume that there exists $x_0>0$ such that $\Phi_\delta'(x_0)=0$ and we will reach a contradiction. Since equation (\ref{eq:y}) can be written as
\begin{equation}\label{Eq:Varphi-bis}
(1+\delta y(x))y''(x)+\delta(y'(x))^2+2x y'(x)=0\quad x>0
\end{equation}
and we know that
\begin{equation}\label{IneqVarphi}
1+\delta \Phi_\delta(x_0)>0,
\end{equation}
we find that $\Phi_\delta''(x_0)=0$. From this, by differentiating (\ref{Eq:Varphi-bis}) and taking (\ref{IneqVarphi}) into consideration, it follows that $\Phi_\delta'''(x_0)=0$. We continue in this fashion obtaining that $\Phi_\delta^{(n)}(x_0)=0$ for all $n\in\mathbb{N}$. This implies that $\Phi_\delta\equiv 0$ in $\mathbb{R}_0^+$, since $\Phi_\delta$ is an analytic function. But the latter contradicts that $\Phi_\delta(+\infty)=1$. Therefore, $\Phi_\delta'(x)\neq 0$ for all $x>0$. This implies that the function $\Phi_\delta'$ does not change its sign in $\mathbb{R}_0^+$. Since $\Phi_\delta(0)\leq 1$ and $\Phi_\delta(+ \infty)=1$, it follows that $\Phi_\delta'(x)> 0$ for all $x>0$. Finally, the last property in (\ref{Prop:Phi}) follows straightforward from the previous ones and the fact that $\Phi_\delta''$ is given by
\begin{equation}
\Phi_\delta''(x)=-\frac{\delta(\Phi_\delta'(x))^2+2x\Phi_\delta'(x)}{1+\delta\Phi_\delta(x)}\quad x>0.
\end{equation}
\end{proof}

\begin{remark}
Note that Figure \ref{Fig:ComparisonBis}a suggest that the assumption 
$\delta>0$ can not be removed from the statement of Theorem \ref{Th:Prop} since 
the two last properties in (\ref{Prop:Phi}) seem to fail. 
\end{remark}

\section{Conclusions}
In this article, we have proposed a method to obtain approximations of the modified error function $\Phi_\delta$ introduced by Cho and Sunderland in 1974 \cite{ChSu1974} as part of a Stefan problem with variable thermal conductivity. This is defined as the solution to a nonlinear boundary value problem for a second order ordinary differential equation which depends on a parameter $\delta>-1$. By assuming that $\Phi_\delta$ admits a power series representation in $\delta$, we proposed some approximations $\Psi_{\delta,m}$ given as the partial sums of the first $m$ terms. It was presented three of them: the zeroth order approximation $\Psi_{\delta,0}=\erf$; the first order approximation $\Psi_{\delta,1}$, which can be written in terms of error and exponential functions only; and the second order approximation $\Psi_{\delta,2}$, which can be written in terms of the error and exponential functions, and some integrals of combinations of them. Analysis of errors between the approximations and the original function was performed by considering only those values of $\delta$ for which existence and uniqueness of the modified error function is known, i.e. to small positive values of the parameter $\delta$. When $m=0$ it was found that $\Phi_{\delta}$ uniformly converges to $\Psi_{\delta,0}$. This suggest that $\Phi_\delta\simeq\Psi_{\delta,0}$ for small values of $\delta$. When $m=1,2$, numerical investigations suggest that $\Psi_{\delta,1}$ and $\Psi_{\delta,2}$ are also accurate approximations for $\Phi_\delta$. In particular, it was obtained that $\Psi_{\delta,1}$ and $\Psi_{\delta,2}$ are better approximations than $\Psi_{\delta,0}$, and that $\Psi_{\delta,1}$ is better than $\Psi_{\delta,2}$. This, together with the simple expression of $\Psi_{\delta,1}$ in terms of the error and exponential functions only, turns the first order approximation the best one among those presented here. The fact that $\Psi_{\delta,1}$ seems to be a better approximation than $\Psi_{\delta,2}$ can not be completely addressed by the authors. Nevertheless, we suggest that the numerical implementation of the integrals in the definition of the second order approximation might be introducing non-negligible perturbations. Comparisons between $\Psi_{\delta,1}$ and $\Phi_{\delta}$ were also presented for values of $\delta>-1$. Good agreement was obtained, even for those values of $\delta$ which do not belong to the theoretical interval determined by the existence and uniqueness results of $\Phi_\delta$. Finally, we proved that the modified error function is an increasing non-negative concave function which is bounded by 1, as the classical error function is, provided $\delta$ assumes small positive values. Results presented here can be used to obtain explicit approximate solutions to Stefan problems for phase-change processes with linearly temperature-dependent thermal conductivity. This investigation suggest that the proof of existence and uniqueness of $\Phi_\delta$ for $-1< \delta<0$ and evaluation of integrals involving exp-, erf-, erfc-functions are still an open problem in the mathematical analysis of Stefan problems.

\section*{Acknowledgements}
This paper has been partially sponsored by the Project PIP No. 0275 from CONICET-UA (Rosario, Argentina) and AFOSR-SOARD Grant FA 9550-14-1-0122. The authors would like to thanks the anonymous referees whose insightful comments have benefited the presentation of this article.

\bibliographystyle{plain}
\bibliography{References}
\end{document}